\newtheorem{definition}{Definition}
\newtheorem{theorem}[definition]{Theorem}
\newtheorem{lemma}[definition]{Lemma}
\newtheorem{corollary}[definition]{Corollary}
\renewcommand{\vert}{\mathrm{vert}}
\renewcommand{\dim}{\mathrm{dim}}
\newcommand{\carr}{\mathrm{carr}}
\renewcommand{\epsilon}{\ensuremath\varepsilon}
\newcommand{\outmap}{\phi}
\newcommand{\outmapp}{\psi}
\newcommand{\Cube}{\mathcal{C}}
\newcommand{\Face}{\mathcal{F}}
\newcommand{\odd}{\mathrm{odd}}
\newcommand{\uso}{\mathrm{uso}}
\newcommand{\puso}{\mathrm{puso}}
\newcommand{\border}{\mathrm{border}}
\newcommand{\KM}{\mathrm{KM}}
\newcommand{\coNP}{\textbf{coNP}}
\newcommand{\orient}{\mathcal{O}}
\title{Pseudo Unique Sink Orientations}
\author{Vitor Bosshard \and Bernd G\"artner}
\date{\today}
\begin{document}

\maketitle

\begin{abstract}
  A unique sink orientation (USO) is an orientation of the
  $n$-dimensional cube graph ($n$-cube) such that every face (subcube)
  has a unique sink. The number of unique sink orientations is
  $n^{\Theta(2^n)}$~\cite{matousek2006number}. If a cube orientation
  is not a USO, it contains a \emph{pseudo unique sink orientation}
  (PUSO): an orientation of some subcube such that every proper face
  of it has a unique sink, but the subcube itself hasn't. In this
  paper, we characterize and count PUSOs of the $n$-cube. We show that
  PUSOs have a much more rigid structure than USOs and that their
  number is between $2^{\Omega(2^{n-\log n})}$ and $2^{O(2^n)}$ which
  is negligible compared to the number of USOs. As tools, we introduce
  and characterize two new classes of USOs: \emph{border USOs} (USOs
  that appear as facets of PUSOs), and \emph{odd USOs} which are dual
  to border USOs but easier to understand.
\end{abstract}

\section{Introduction}

\paragraph{Unique sink orientations.}
Since more than 15 years, unique sink orientations (USOs) have been
studied as particularly rich and appealing combinatorial abstractions
of linear programming (LP)~\cite{gartner2006linear} and other
related problems~\cite{fischer2004}. Originally introduced by
Stickney and Watson in the context of the P-matrix linear
complementarity problem (PLCP) in 1978~\cite{StiWat}, USOs have been
revived by Szab\'o and Welzl in 2001, with a more theoretical
perspective on their structural and algorithmic
properties~\cite{szabo2001unique}.

The major motivation behind the study of USOs is the open question
whether efficient combinatorial algorithms exist to solve PLCP and
LP. Such an algorithm is running on a RAM and has runtime bounded by a
polynomial in the \emph{number} of input values (which are considered
to be real numbers). In case of LP, the runtime should be polynomial
in the number of variables and the number of constraints. For LP, the
above open question might be less relevant, since polynomial-time
algorithms exist in the Turing machine model since the breakthrough
result by Khachiyan in 1980~\cite{Kha}. For PLCP, however, no such
algorithm is known, so the computational complexity of PLCP remains
open.

Many algorithms used in practice for PCLP and LP are combinatorial and
in fact \emph{simplex-type} (or \emph{Bard-type}, in the LCP
literature). This means that they follow a locally improving path of
candidate solutions until they either cycle (precautions need to be
taken against this), or they get stuck---which in case of PLCP and LP
fortunately means that the problem has been solved. The less fortunate
facts are that for most known algorithms, the length of the path is
exponential in the worst case, and that for no algorithm, a
polynomial bound on the path length is known. 

USOs allow us to study simplex-type algorithms in a completely
abstract setting where cube vertices correspond to candidate
solutions, and outgoing edges lead to locally better
candidates. Arriving at the unique sink means that the problem has
been solved. The requirement that all faces have unique sinks is
coming from the applications, but is also critical in the abstract
setting itself: without it, there would be no hope for nontrivial
algorithmic results~\cite{Aldous}.

On the one hand, this kind of abstraction makes a hard problem even
harder; on the other hand, it sometimes allows us to see what is
really going on, after getting rid of the numerical values that hide
the actual problem structure. In the latter respect, USOs have been
very successful. 

For example, in a USO we are not confined to following a path, we can
also ``jump around''. The fastest known deterministic algorithm for
finding the sink in a USO does exactly this~\cite{szabo2001unique} and
implies the fastest known deterministic combinatorial algorithm for LP
if the number of constraints is twice the number of
variables~\cite{gartner2006linear}. In a well-defined sense, this is
the hardest case. Also, \textsc{RandomFacet}, the currently best
randomized combinatorial simplex algorithm for LP~\cite{Kal,MSW}
actually works on acyclic USOs (AUSOs) with the same (subexponential) runtime
and a purely combinatorial analysis~\cite{gartner2002}.

The USO abstraction also helps in proving lower bounds for the
performance of algorithms. The known (subexponential) lower bounds for
\textsc{RandomFacet} and \textsc{RandomEdge}---the most natural
randomized simplex algorithm---have first been proved on 
AUSOs~\cite{Mat,MATOUSEK2006262} and only later on actual linear
programs~\cite{Friedmann}. It is unknown which of the two algorithms
is better on actual LPs, but on AUSOs, \textsc{RandomEdge} is
strictly slower in the worst case~\cite{HZ}.

Finally, USOs are intriguing objects from a purely mathematical point
of view, and this is the view that we are mostly adopting in in this paper.

\paragraph{Pseudo unique sink orientations.} If a cube orientation has
a unique sink in every face except the cube itself, we call it a pseudo
unique sink orientation (PUSO). Every cube orientation that is not a
USO contains some PUSO. The study of PUSOs originates from the
master's thesis of the first author~\cite{Bosshard} where the PUSO
concept was used to obtain improved USO recognition algorithms;
see Section~\ref{sec:recognition} below.

One might think that PUSOs have more variety than USOs: instead of
exactly one sink in the whole cube, we require any number of sinks not
equal to one. But this intuition is wrong: as we show, the number of
PUSOs is much smaller than the number of USOs of the same dimension;
in particular, only a negligible fraction of all USOs of one dimension
lower may appear as facets of PUSOs. These \emph{border USOs} and the
\emph{odd USOs}---their duals---have a quite interesting structure
that may be of independent interest. The discovery of these USO
classes and their basic properties, as well as the implied counting
results for them and for PUSOs, are the main contributions of the
paper.

\paragraph{Overview of the paper.} Section~\ref{sec:cubes} formally
introduces cubes and orientations, to fix the language. We will define
an orientation via its \emph{outmap}, a function that yields for
every vertex its outgoing edges. Section~\ref{sec:usos} defines USOs
and PUSOs and gives some examples in dimensions two and three to
illustrate the concepts. In Section~\ref{sec:outmaps}, we characterize
outmaps of PUSOs, by suitably adapting the characterization for USOs
due to Szab\'o and
Welzl~\cite{szabo2001unique}. Section~\ref{sec:recognition} uses the
PUSO characterization to describe a USO recognition algorithm that is
faster than the one resulting from the USO characterization of Szab\'o
and Welzl. Section~\ref{sec:border} characterizes the USOs that may
arise as facets of PUSOs. As these are on the border between USOs and
non-USOs, we call them \emph{border USOs.} Section~\ref{sec:odd}
introduces and characterizes the class of \emph{odd} USOs that are
dual to border USOs under inverting the outmap. Odd USOs are easier to
visualize and work with, since in any face of an odd USO we again have
an odd USO, a property that fails for border USOs. We also give a
procedure that allows us to construct many odd USOs from a canonical
one, the \emph{Klee-Minty cube}. Based on this,
Section~\ref{sec:counting} proves (almost matching) upper and lower
bounds for the number of odd USOs in dimension $n$. Bounds on the
number of PUSOs follow from the characterization of border USOs in
Section~\ref{sec:border}. In Section~\ref{sec:conclusion}, we mention
some open problems. 
 
\section{Cubes and Orientations}\label{sec:cubes}

Given finite sets $A\subseteq B$, the \emph{cube}
$\Cube=\Cube^{[A,B]}$ is the graph with vertex set
$\vert{\Cube} = [A,B]:=\{V: A\subseteq V \subseteq B\}$ and edges
between any two subsets $U,V$ for which $|U\oplus V|=1$, where
$U\oplus V=(U\setminus V)\cup (V\setminus U) = (U\cup V)\setminus
(U\cap V)$ is symmetric
difference. We sometimes need the following easy fact.
\begin{equation}\label{eq:symdiff}
(U\oplus V)\cap X = (U\cap X) \oplus (V\cap X).
\end{equation}

For a cube $\Cube=\Cube^{[A,B]}$, $\dim{\Cube}:=|B\setminus A|$ is its
\emph{dimension}, $\carr{\Cube} := B\setminus A$ its \emph{carrier}.
A \emph{face} of $\Cube$ is a subgraph of the form
$\Face=\Cube^{[I,J]}$, with $A\subseteq I\subseteq J\subseteq B$. If
$\dim{\Face}=k$, $\Face$ is a \emph{$k$-face} or \emph{$k$-cube}. A
\emph{facet} of an $n$-cube $\Cube$ is an $(n-1)$-face of $\Cube$.
Two vertices $U,V\in\vert{\Face}$ are called \emph{antipodal} in
$\Face$ if $V=\carr{\Face}\setminus U$.

If $A=\emptyset$, we abbreviate $\Cube^{[A,B]}$ as $\Cube^B$. The
\emph{standard $n$-cube} is $\Cube^{[n]}$ with
$[n]:=\{1,2,\ldots,n\}$.

An \emph{orientation} $\orient$ of a graph $G$ is a digraph that
contains for every edge $\{U,V\}$ of $G$ exactly one directed edge $(U,V)$
or $(V,U)$. An orientation of a cube $\Cube$ can be specified by its
\emph{outmap} $\outmap: \vert{\Cube} \rightarrow 2^{\carr{\Cube}}$
that returns for every vertex the \emph{outgoing coordinates}. On
every face $\Face$ of $\Cube$ (including $\Cube$ itself), the outmap
induces the orientation
\[\Face_{\outmap} := (\vert{\Face}, \{(V,V\oplus\{i\}): V\in \vert{\Face},
i\in \outmap(V)\cap \carr{\Face}\}).
\]
In order to actually get a proper orientation of $\Cube$, the outmap
must be \emph{consistent}, meaning that it satisfies
$i\in \outmap(V)\oplus\outmap(V\oplus\{i\})$ for all
$V\in\vert{\Cube}$ and $i\in\carr{\Cube}$.

Note that the outmap of $\Face_{\outmap}$ is not $\phi$
but $\outmap_{\Face}:\vert{\Face}\rightarrow 2^{\carr{\Face}}$ defined by 
\begin{equation}\label{eq:faceout}
\outmap_{\Face}(V)=\outmap(V)\cap\carr{\Face}.
\end{equation}

In general, when we talk about a cube orientation
$\orient=\Cube_{\outmap}$, the domain of $\outmap$ may be a supercube
of $\Cube$ in the given context. This avoids unnecessary indices that
we would get in defining $\orient=\Cube_{\outmap_{\Cube}}$ via its
``official'' outmap $\outmap_{\Cube}$. However, sometimes we want to
make sure that $\outmap$ is actually the outmap of $\orient$, and then
we explicitly say so.

Figure~\ref{fig:orientation} depicts an outmap and the corresponding 2-cube orientation. 

\begin{figure}[htb]
\begin{center}
\includegraphics[width=0.7\textwidth]{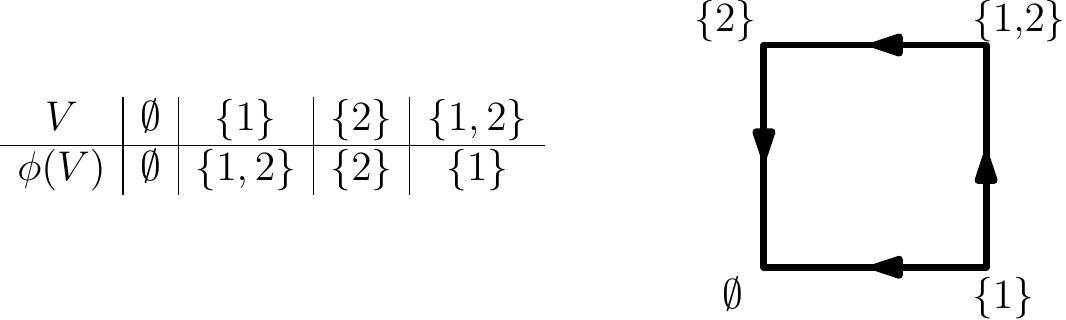}
\end{center}
\caption{An outmap $\outmap$ and the induced 2-cube orientation $\Cube_{\outmap}$}
\label{fig:orientation}
\end{figure}

\section{(Pseudo) Unique Sink Orientations}\label{sec:usos}

\begin{definition}[USO~\cite{szabo2001unique}] 
  A unique sink orientation (USO) of a cube $\Cube$ is an orientation
  $\Cube_{\outmap}$ such that every face $\Face_{\outmap}$ has a
  unique sink. Equivalently, every face $\Face_{\outmap}$ is a unique
  sink orientation.
\end{definition}

Figure~\ref{fig:2cubes} shows the four combinatorially different
(pairwise non-isomorphic) orientations of the 2-cube. The eye and the
bow are USOs.\footnote{The naming goes back to Szab\'o and
  Welzl~\cite{szabo2001unique}.} The twin peak is not since it has two
sinks in the whole cube (which is a face of itself). The cycle is not
a USO, either, since it has no sink in the whole cube. The unique sink
conditions for $0$- and $1$-faces (vertices and edges) are always
trivially satisfied.

\begin{figure}[htb]
\begin{center}
\includegraphics[width=\textwidth]{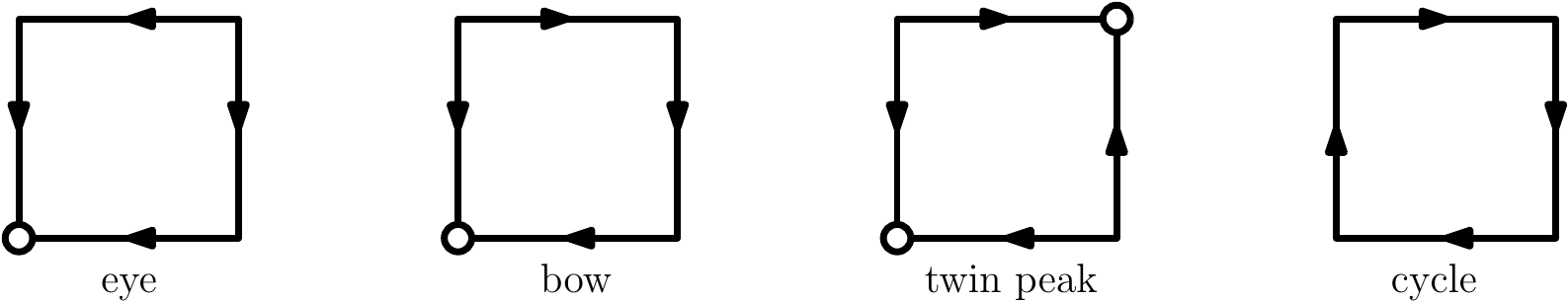}
\end{center}
\caption{The 4 combinatorially different orientations of the
  2-cube}
\label{fig:2cubes}
\end{figure}

If an orientation $\Cube_{\outmap}$ is not a USO, there is a smallest
face $\Face_{\outmap}$ that is not a USO. We call the orientation in
such a face a \emph{pseudo} unique sink orientation.

\begin{definition}[PUSO]
  A pseudo unique sink orientation (PUSO) of a cube $\Cube$ is an
  orientation $\Cube_{\outmap}$ that does not have a unique sink, but
  every proper face $\Face_{\outmap} \neq \Cube_{\outmap}$ has a
  unique sink.
\end{definition}

The twin peak and the cycle in
Figure~\ref{fig:2cubes} are the two combinatorially different
PUSOs of the 2-cube.  The 3- cube has $19$ combinatorially
different USOs~\cite{StiWat}, but only two combinatorially different
PUSOs, see Figure~\ref{fig:3pusos} together with
Corollary~\ref{cor:3pusos} below.

\begin{figure}[htb]
\begin{center}
\includegraphics[width=0.65\textwidth]{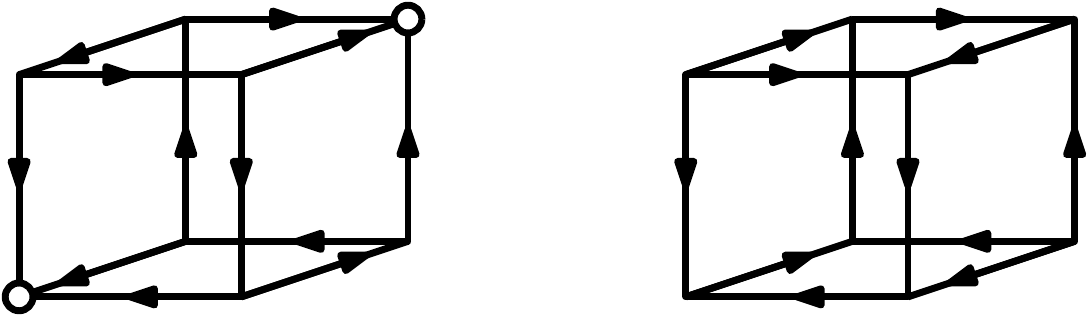}
\end{center}
\caption{The two combinatorially different PUSOs of the 3-cube}
\label{fig:3pusos}
\end{figure}

We let $\uso(n)$ and $\puso(n)$ denote the number of USOs and PUSOs of
the standard $n$-cube. We have $\uso(0)=1,\uso(1)=2$ as well as
$\uso(2)=12$ (4 eyes and 8 bows). Moreover, $\puso(0)=\puso(1)=0$ and
$\puso(2)=4$ (2 twin peaks, 2 cycles).  

\section{Outmaps of (Pseudo) USOs}\label{sec:outmaps}

Outmaps of USOs have a simple characterization~\cite[Lemma
2.3]{szabo2001unique}:
$\outmap:\vert{\Cube}\rightarrow 2^{\carr{\Cube}}$ is the outmap of a
  USO of $\Cube$ if and only if
\begin{equation}\label{eq:char}
(\outmap(U)\oplus \outmap(V))\cap (U\oplus V) \neq \emptyset
\end{equation}
holds for all pairs of distinct vertices $U,V\in\vert{\Cube}$. This condition
means the following: within the face
$\Cube^{[U\cap V, U\cup V]}$ \emph{spanned} by $U$ and $V$, there is a
coordinate that is outgoing for exactly one of the two vertices. In particular,
any two distinct vertices have different outmap values, so
$\outmap$ is injective and hence bijective. 

This characterization implicitly makes a more general statement: for
every face $\Face$, orientation $\Face_{\outmap}$ is a USO if and only
if (\ref{eq:char}) holds for all pairs of distinct vertices
$U,V\in\Face$. The reason is that the validity of (\ref{eq:char}) only
depends on the behavior of $\outmap$ within the face spanned by $U$ and
$V$. Formally, for $U,V\in\Face$, (\ref{eq:char}) is equivalent to the
USO-characterizing condition
$(\outmap_{\Face}(U)\oplus \outmap_{\Face}(V))\cap (U\oplus
V)\neq\emptyset$
for the orientation $\Face_{\outmap_{\Face}}=\Face_{\outmap}$.

\begin{lemma}\label{lem:strongchar}
  Let $\Cube$ be a cube,
  $\outmap: \vert{\Cube} \rightarrow 2^{\carr{\Cube}}$,
  $\Face$ a face of $\Cube$. Then $\Face_{\outmap}$ is a USO if and
  only if
\[
(\outmap(U)\oplus \outmap(V))\cap (U\oplus V) \neq \emptyset
\]
holds for all pairs of distinct vertices $U,V\in\vert{\Face}$. In this
case, the outmap $\outmap_{\Face}$ of $\Face_{\outmap}$ is bijective.
\end{lemma}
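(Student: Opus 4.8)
The plan is to deduce the lemma from the Szabó--Welzl characterization (\ref{eq:char}) applied to the cube $\Face$ \emph{in its own right}, equipped with its official outmap $\outmap_{\Face}$. By definition we have $\Face_{\outmap}=\Face_{\outmap_{\Face}}$, so (\ref{eq:char}) gives that $\Face_{\outmap}$ is a USO if and only if $(\outmap_{\Face}(U)\oplus \outmap_{\Face}(V))\cap(U\oplus V)\neq\emptyset$ for all distinct $U,V\in\vert{\Face}$. Hence it suffices to show that, for $U,V\in\vert{\Face}$, this face-restricted expression equals the ambient-cube expression $(\outmap(U)\oplus \outmap(V))\cap(U\oplus V)$.

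The key step is a short computation using (\ref{eq:faceout}) and (\ref{eq:symdiff}). First, $\outmap_{\Face}(U)\oplus \outmap_{\Face}(V) = (\outmap(U)\cap\carr{\Face})\oplus(\outmap(V)\cap\carr{\Face}) = (\outmap(U)\oplus \outmap(V))\cap\carr{\Face}$, where the first equality is (\ref{eq:faceout}) and the second is (\ref{eq:symdiff}) with $X=\carr{\Face}$. Second, writing $\Face=\Cube^{[I,J]}$, any $U,V\in\vert{\Face}$ satisfy $I\subseteq U,V\subseteq J$, so $U\oplus V\subseteq J\setminus I=\carr{\Face}$; therefore the extra intersection with $\carr{\Face}$ is absorbed:
\[
(\outmap_{\Face}(U)\oplus \outmap_{\Face}(V))\cap(U\oplus V) = (\outmap(U)\oplus \outmap(V))\cap\carr{\Face}\cap(U\oplus V) = (\outmap(U)\oplus \outmap(V))\cap(U\oplus V).
\]
Combining this with the previous paragraph yields the claimed equivalence.

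For the final assertion, suppose $\Face_{\outmap}$ is a USO. If $U\neq V$ were vertices of $\Face$ with $\outmap_{\Face}(U)=\outmap_{\Face}(V)$, then $\outmap_{\Face}(U)\oplus \outmap_{\Face}(V)=\emptyset$, contradicting the condition; so $\outmap_{\Face}$ is injective. Since $\vert{\Face}$ and $2^{\carr{\Face}}$ are finite sets of the same cardinality $2^{\dim{\Face}}$, injectivity forces bijectivity. The whole argument is essentially bookkeeping around the definitions; the only point that requires attention is the containment $U\oplus V\subseteq\carr{\Face}$ for $U,V\in\vert{\Face}$, which is precisely what makes the face-level and cube-level versions of (\ref{eq:char}) coincide, so I do not anticipate a genuine obstacle.
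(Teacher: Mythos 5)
Your proposal is correct and follows essentially the same route as the paper, which also derives the lemma by applying the Szab\'o--Welzl characterization to $\Face$ with its official outmap $\outmap_{\Face}$ and observing that for $U,V\in\vert{\Face}$ the face-level and ambient conditions coincide (because $U\oplus V\subseteq\carr{\Face}$), with injectivity of a bijection-to-be following from the nonemptiness condition. Your explicit computation via (\ref{eq:symdiff}) just spells out what the paper leaves as a remark.
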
 

As a consequence, outmaps of PUSOs can be characterized as follows:
(\ref{eq:char}) holds for all pairs of non-antipodal vertices $U,V$
(which always span a proper face), but fails for some pair
$U, V=\carr{\Cube}\setminus U$ of antipodal vertices. As the validity
of (\ref{eq:char}) is invariant under replacing all outmap values
$\outmap(V)$ with $\outmap'(V)=\outmap(V)\oplus R$ for some fixed
$R\subseteq\carr{\Cube}$, we immediately obtain that PUSOs (as well as
USOs~\cite[Lemma 2.1]{szabo2001unique}) are closed under
\emph{flipping coordinates} (reversing all edges along some subset of
the coordinates).

\begin{lemma}
    \label{lem:puso_flip}
    Let $\Cube$ be a cube,
    $\outmap: \vert{\Cube} \rightarrow 2^{\carr{\Cube}}$, $\Face$ a
    face of $\Cube$.  Suppose that $\Face_{\outmap}$ is a PUSO and
    $R\subseteq \carr{\Cube}$. Consider the \emph{$R$-flipped}
    orientation $\Cube_{\outmap'}$ induced by the outmap
    \begin{equation*}
        \outmap'(V) := \outmap(V) \oplus R, \quad \forall V\in \vert{\Cube}.
    \end{equation*}
    Then $\Face_{\outmap'}$ is a PUSO as well.
\end{lemma}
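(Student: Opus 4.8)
The plan is to read the claim off the outmap characterization of PUSOs obtained from Lemma~\ref{lem:strongchar} just before the statement: for a face $\Face$ of $\Cube$, the orientation $\Face_{\outmap}$ is a PUSO if and only if
\[
(\outmap(U)\oplus\outmap(V))\cap(U\oplus V)\neq\emptyset
\]
for every pair of distinct, non-antipodal vertices $U,V\in\vert{\Face}$, while the same intersection is empty for at least one antipodal pair of $\Face$. The key point is that whether two vertices of $\Face$ are antipodal depends only on $\carr{\Face}$ and not on the orientation, so it is untouched by the flip. Hence it suffices to show that for all distinct $U,V\in\vert{\Face}$,
\[
(\outmap'(U)\oplus\outmap'(V))\cap(U\oplus V)=(\outmap(U)\oplus\outmap(V))\cap(U\oplus V);
\]
then the characterization holds for $\outmap'$ on $\Face$ exactly when it holds for $\outmap$, which it does by hypothesis.

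For this it is enough to observe $\outmap'(U)\oplus\outmap'(V)=\outmap(U)\oplus\outmap(V)$, which is immediate from $\outmap'(W)=\outmap(W)\oplus R$, the commutativity and associativity of $\oplus$, and $R\oplus R=\emptyset$:
\[
(\outmap(U)\oplus R)\oplus(\outmap(V)\oplus R)=\outmap(U)\oplus\outmap(V).
\]
Intersecting both sides with $U\oplus V$ gives the displayed set equality. The same identity with $V$ replaced by $V\oplus\{i\}$ also shows that $\outmap'$ is consistent along the edges of $\Face$ whenever $\outmap$ is, so $\Face_{\outmap'}$ is a genuine orientation; in fact this is automatic once the PUSO characterization has been checked, since condition~(\ref{eq:char}) applied to adjacent vertices is precisely consistency.

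Combining the two steps finishes the proof: because $\Face_{\outmap}$ is a PUSO, the intersection above is non-empty for every distinct non-antipodal pair and empty for some antipodal pair of $\Face$; by the set equality the identical statements hold verbatim with $\outmap'$ in place of $\outmap$; so $\Face_{\outmap'}$ is a PUSO. I do not expect a real obstacle here — the statement is essentially a restatement of the remark preceding it. The only points that deserve a line of care are invoking the characterization for the face $\Face$ (with carrier $\carr{\Face}$ rather than $\carr{\Cube}$), and recording that antipodality, being a purely combinatorial relation on the vertices of $\Face$, is invariant under flipping coordinates; both are dealt with above.
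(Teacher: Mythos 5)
Your proof is correct and is essentially the argument the paper itself uses: the lemma is derived there as an immediate consequence of the PUSO characterization via condition~(\ref{eq:char}) together with the observation that $\outmap'(U)\oplus\outmap'(V)=\outmap(U)\oplus\outmap(V)$ because $R\oplus R=\emptyset$. Your extra remarks on antipodality being flip-invariant and on consistency for adjacent pairs are accurate and consistent with the paper's (implicit) reasoning.
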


Using this, we can show that in a PUSO, (\ref{eq:char}) must actually
fail on \emph{all} pairs of antipodal vertices, not just on some pair,
and this is the key to the strong structural properties of PUSOs.

\begin{theorem}[PUSO characterization]\label{thm:char}
  Let $\Cube$ be a cube of dimension at least $2$,
  $\outmap:\vert{\Cube}\rightarrow 2^{\carr{\Cube}}$, $\Face$ a face
  of $\Cube$. Then $\Face_{\outmap}$ is a PUSO
  if and only if
\begin{itemize}
\item[(i)] condition (\ref{eq:char}) holds for
  all $U,V\in \vert{\Face}, V\neq U, \carr{\Face}\setminus U$ (pairs
  of distinct, non-antipodal vertices in $\Face$), and
\item[(ii)] condition (\ref{eq:char}) fails for all
  $U,V\in \vert{\Face}, V=\carr{\Face}\setminus U$ (pairs of antipodal
  vertices in $\Face$).
\end{itemize}
\end{theorem}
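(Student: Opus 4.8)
The plan is to prove the two implications separately, after reducing to the case $\Face=\Cube$. All vertices of $\Face$ agree on coordinates outside $\carr\Face$, so any $U,V\in\vert\Face$ have $U\oplus V\subseteq\carr\Face$, and then (\ref{eq:symdiff}) shows that $(\outmap(U)\oplus\outmap(V))\cap(U\oplus V)$ is unchanged when $\outmap$ is replaced by $\outmap_\Face$. Hence (i), (ii) and the property ``$\Face_\outmap$ is a PUSO'' depend only on the cube orientation $(\Face,\outmap_\Face)$, so we may assume $\Face=\Cube$, that $\outmap$ is the outmap of $\Cube_\outmap$, and $n:=\dim\Cube\ge 2$ (faces of dimension at most one are degenerate and tacitly excluded). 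For the ``if'' direction I would assume (i) and (ii): for a proper face $\face\subsetneq\Cube$ and distinct $U,V\in\vert\face$ we get $U\oplus V\subseteq\carr\face\subsetneq\carr\Cube$, so $U,V$ are non-antipodal in $\Cube$; thus (i) gives (\ref{eq:char}) and Lemma~\ref{lem:strongchar} makes $\face_\outmap$ a USO, while (ii) hands us an antipodal pair violating (\ref{eq:char}), so $\Cube_\outmap$ is not a USO. Therefore $\Cube_\outmap$ is a PUSO.

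The ``only if'' direction is the substantive one; assuming $\Cube_\outmap$ is a PUSO, I would proceed in three steps. Step 1: every fiber of $\outmap$ is an antipodal pair. Indeed, if $U\ne V$ with $\outmap(U)=\outmap(V)$ were non-antipodal, the proper face $\face$ they span would be a USO (every proper face of a PUSO is one), on which $\outmap_\face$ is injective by Lemma~\ref{lem:strongchar}; but $\outmap_\face(U)=\outmap(U)\cap\carr\face=\outmap(V)\cap\carr\face=\outmap_\face(V)$, a contradiction. In particular $|\outmap^{-1}(s)|\le 2$ for all $s$. Step 2: a PUSO has at most two sinks. Fix a coordinate $i$; the two facets of $\Cube$ that differ in coordinate $i$ are proper faces, hence USOs, with sinks $z_0$ and $z_1$. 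A sink of $\Cube_\outmap$ lies in one of these facets and, having empty outmap, is the sink of that facet, so it is $z_0$ or $z_1$.

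Step 3 is where the PUSO condition really bites. For every $R\subseteq\carr\Cube$, Lemma~\ref{lem:puso_flip} tells us the $R$-flipped orientation $\Cube_{\outmap'}$ with $\outmap'(V)=\outmap(V)\oplus R$ is again a PUSO, and its sinks are exactly the vertices in $\outmap^{-1}(R)$. By the very definition of PUSO this set does not have size one, and by Step 2 it has size at most two; hence $|\outmap^{-1}(R)|\in\{0,2\}$ for \emph{every} $R$. Consequently every vertex lies in a two-element fiber of $\outmap$, which by Step 1 is an antipodal pair on which $\outmap$ is constant; so (\ref{eq:char}) fails on every antipodal pair, which is exactly (ii). Finally (i) follows because every proper face of the PUSO is a USO, and any non-antipodal $U,V$ span such a face and lie in it, so Lemma~\ref{lem:strongchar} yields (\ref{eq:char}) for them.

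I do not anticipate a real obstacle once Step 3 is seen; the tempting but awkward route would be to prove (ii) by ``propagating'' a single violation of (\ref{eq:char}) from one antipodal pair to its neighbours in the folded cube, which seems to need fiddly local case analysis. The clean idea is instead to forget any particular violation and observe that applying the bare definition of a PUSO to all $2^{n}$ flipped copies of $\Cube_\outmap$ pins down the entire fiber structure of $\outmap$ at once; from there (ii) — and the earlier observation that (\ref{eq:char}) must fail somewhere — come out for free.
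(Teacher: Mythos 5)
Your proof is correct and rests on the same key idea as the paper's: applying Lemma~\ref{lem:puso_flip} so that a fiber $\outmap^{-1}(R)$ becomes the sink set of a PUSO, which therefore cannot have size one. The paper packages this vertex-by-vertex (flip by $R=\outmap(U)$ to make $U$ a sink, use (i) to rule out non-antipodal second sinks, and conclude that the antipodal vertex is the second sink), whereas you run it over all fibers at once with an explicit ``at most two sinks'' step; the difference is purely organizational.
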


\begin{proof} In view of the above discussion, it only remains to show
  that (ii) holds if $\Face_{\outmap}$ is a PUSO. Let
  $U\in\vert{\Face}$.  Applying Lemma~\ref{lem:puso_flip} with
  $R=\outmap(U)$ does not affect the validity of (\ref{eq:char}), so we may assume
  w.l.o.g.\ that $\outmap(U)=\emptyset$, hence $U$ is a sink in
  $\Face_{\outmap}$. For a non-antipodal $W\in\vert{\Face}$, (i)
  implies the existence of some
  $i\in\outmap(W)\cap(U\oplus W)\subseteq
  \outmap(W)\cap\carr{\Face}=\outmap_{\Face}(W)$,
  hence such a $W$ is not a sink in $\Face_{\outmap}$. But then
  $V=\carr{\Face}\setminus U$ must be a second sink in
  $\Face_{\outmap}$, because PUSO $\Face_{\outmap}$ does not have a
  unique sink. This in turn implies that (\ref{eq:char}) fails for
  $U,V=\carr{\Face}\setminus U$.
\end{proof}

\begin{corollary} \label{cor:puso_antipodals} Let $\Cube_{\outmap}$ be
  a PUSO with outmap $\outmap$.
\begin{itemize}
\item[(i)] Any two antipodal vertices $U,V=\carr{\Cube}\setminus U$
  have the same outmap value, $\outmap(U)=\outmap(V)$. 
\item[(ii)] $\Cube_{\outmap}$ either has no sink, or exactly two sinks.
\end{itemize}
\end{corollary}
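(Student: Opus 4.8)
The plan is to read both statements directly off the PUSO characterization, Theorem~\ref{thm:char}, instantiated with $\Face=\Cube$; this is legitimate since a PUSO cannot have dimension $0$ or $1$, so $\dim\Cube\ge 2$ and the theorem applies. No auxiliary constructions are needed.

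First I would handle part (i). Fix a vertex $U$ and let $V=\carr{\Cube}\setminus U$ be its antipode, so that $U\oplus V=\carr{\Cube}$. Theorem~\ref{thm:char}(ii) tells us that (\ref{eq:char}) fails on this antipodal pair, i.e.\ $(\outmap(U)\oplus\outmap(V))\cap(U\oplus V)=\emptyset$. Here it is important that $\outmap$ is the genuine outmap of $\Cube_{\outmap}$: then $\outmap(U),\outmap(V)\subseteq\carr{\Cube}=U\oplus V$, so $\outmap(U)\oplus\outmap(V)\subseteq U\oplus V$ and the intersection above is nothing but $\outmap(U)\oplus\outmap(V)$. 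Hence $\outmap(U)\oplus\outmap(V)=\emptyset$, i.e.\ $\outmap(U)=\outmap(V)$, as claimed.

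Next, part (ii) follows quickly. If $\Cube_{\outmap}$ has no sink, there is nothing to prove. Otherwise pick a sink $U$, so $\outmap(U)=\emptyset$; by part (i) its antipode $V=\carr{\Cube}\setminus U$ also has $\outmap(V)=\emptyset$ and is a second sink. If some $W\neq U,V$ were a third sink, then $W$ is distinct from $U$ and non-antipodal to $U$ (the antipode of $U$ is the single vertex $V$), so Theorem~\ref{thm:char}(i) yields $(\outmap(U)\oplus\outmap(W))\cap(U\oplus W)\neq\emptyset$ --- impossible when $\outmap(U)=\outmap(W)=\emptyset$. So there are exactly two sinks.

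I do not expect any real obstacle: the argument is short once Theorem~\ref{thm:char} is available. The only subtle point is the one flagged in part (i), namely remembering that the statement concerns the honest outmap of $\Cube_{\outmap}$, so that $\carr{\Cube}$ coincides with $U\oplus V$ for antipodal $U,V$; if instead $\outmap$ were merely inherited from a supercube, one would additionally have to intersect with $\carr{\Cube}$ before concluding.
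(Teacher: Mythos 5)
Your proposal is correct and follows essentially the same route as the paper: part (i) is read off from Theorem~\ref{thm:char}(ii) using $U\oplus V=\carr{\Cube}$, and part (ii) rules out a third sink because it would be non-antipodal to an existing one, violating condition (\ref{eq:char}) on the proper face they span. The extra care you take about $\outmap$ being the honest outmap (so that $\outmap(U)\oplus\outmap(V)\subseteq U\oplus V$) is a valid and slightly more explicit version of the paper's one-line equivalence.
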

\begin{proof} For antipodal vertices, $U\oplus V=\carr{\Cube}$, so 
  $(\outmap(U)\oplus \outmap(V))\cap (U\oplus V) = \emptyset$ is
  equivalent to $\outmap(U)=\outmap(V)$. In particular,
  the number of sinks is even but cannot exceed $2$, as otherwise,
  there would be two non-antipodal sinks; the proper face they
  span would then have more than one sink, a contradiction.
\end{proof}

We can use the characterization of Theorem~\ref{thm:char} to show that
PUSOs exist in every dimension $n\geq 2$.

\begin{lemma}[PUSO Existence]
  \label{puso_existence} Let $n \geq 2$, $\Cube$ the standard $n$-cube
  and $\pi:[n]\rightarrow[n]$ a permutation consisting of a single
  $n$-cycle. Consider the function $\outmap: 2^{[n]} \mapsto 2^{[n]}$
  defined by
    \[
    \outmap(V) = \{i\in [n]: |V\cap \{i, \pi(i)\}|=1\}, \quad \forall 
    V\subseteq[n].
    \]
    Then $\Cube_{\outmap}$ is a PUSO. 
\end{lemma}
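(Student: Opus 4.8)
The plan is to apply the PUSO characterization (Theorem~\ref{thm:char}) with $\Face=\Cube$, so that it suffices to verify that condition (\ref{eq:char}) holds for every pair of distinct non-antipodal vertices of $\Cube$ and fails for every pair of antipodal vertices. First I would record the key structural fact that $\outmap$ is $\mathbb{F}_2$-linear with respect to symmetric difference: since $[\,i\in\outmap(V)\,]=[\,i\in V\,]\oplus[\,\pi(i)\in V\,]$, a one-line expansion gives $\outmap(U)\oplus\outmap(V)=\outmap(U\oplus V)$ for all $U,V$. (Applied to adjacent vertices $V,V\oplus\{i\}$ this yields $\outmap(\{i\})=\{i,\pi^{-1}(i)\}\ni i$, so $\outmap$ is consistent and $\Cube_{\outmap}$ is a genuine orientation.)

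Using linearity, set $W:=U\oplus V$; then condition (\ref{eq:char}) for the pair $U,V$ becomes simply $\outmap(W)\cap W\neq\emptyset$. As $U,V$ range over distinct vertices, $W$ ranges over all nonempty subsets of $[n]$, and $W=[n]$ corresponds exactly to antipodal pairs. So the whole statement reduces to: $\outmap(W)\cap W\neq\emptyset$ for every nonempty proper $W\subsetneq[n]$ (this is (i)), and $\outmap([n])\cap[n]=\emptyset$ (this is (ii)).

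Condition (ii) is immediate: for each $i$, both $i$ and $\pi(i)$ lie in $[n]$, so $|[n]\cap\{i,\pi(i)\}|=2$ and hence $\outmap([n])=\emptyset$. For condition (i), fix a nonempty proper $W\subsetneq[n]$. Note that for $i\in W$, the condition $i\in\outmap(W)$ (i.e.\ $|W\cap\{i,\pi(i)\}|=1$) is equivalent to $\pi(i)\notin W$; thus $\outmap(W)\cap W\neq\emptyset$ says precisely that $\pi(i)\notin W$ for some $i\in W$, i.e.\ $\pi(W)\not\subseteq W$. If this failed we would have $\pi(W)\subseteq W$, hence $\pi(W)=W$ because $\pi$ is a bijection of the finite set $[n]$; but a permutation that is a single $n$-cycle has no invariant subset other than $\emptyset$ and $[n]$ (iterating $\pi$ from any element of an invariant set reaches all of $[n]$), contradicting $\emptyset\neq W\subsetneq[n]$. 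Hence (i) holds, and Theorem~\ref{thm:char} gives that $\Cube_{\outmap}$ is a PUSO.

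The only place where anything beyond a routine unwinding of definitions is needed is the invariant-subset argument in the proper-subset case, and that is exactly the step that uses the hypothesis that $\pi$ is a single $n$-cycle rather than an arbitrary permutation; I expect that to be the (mild) crux, together with the initial observation that $\outmap$ is linear.
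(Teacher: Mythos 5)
Your proof is correct and follows essentially the same route as the paper: both apply Theorem~\ref{thm:char}, dispose of antipodal pairs by noting the outmap values coincide, and for non-antipodal pairs exhibit a coordinate $i$ with $i\in U\oplus V$ and $\pi(i)\notin U\oplus V$ (the paper finds it by walking along the $n$-cycle, you by observing that a single $n$-cycle has no nontrivial invariant subset --- the same fact). Your preliminary observation that $\outmap$ is linear, i.e.\ $\outmap(U)\oplus\outmap(V)=\outmap(U\oplus V)$, is a clean repackaging that reduces everything to the single set $W=U\oplus V$, but it does not change the substance of the argument.
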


\begin{proof}
  According to Theorem~\ref{thm:char}, we need to show that condition
  (\ref{eq:char}) fails for all pairs of antipodal vertices, but that
  it holds for all pairs of distinct vertices that are not
  antipodal. 

  We first consider two antipodal vertices $U$ and $V=[n]\setminus U$
  in which case we get $\outmap(U)=\outmap(V)$, so (\ref{eq:char})
  fails. If $U$ and $V$ are distinct and not antipodal, there is some
  coordinate in which $U$ and $V$ differ, \emph{and} some coordinate
  in which $U$ and $V$ agree. Hence, if we traverse the $n$-cycle
  $(1,\pi(1),\pi(\pi(1)),\ldots)$, we eventually find two consecutive
  elements $i,\pi(i)$ such that $U$ and $V$ differ in coordinate $i$
  but agree in coordinate $\pi(i)$, meaning that
  $i\in (\outmap(U)\oplus \outmap(V))\cap (U\oplus V)$, so
  (\ref{eq:char}) holds.
\end{proof}

We conclude this section with another consequence of
Theorem~\ref{thm:char} showing that PUSOs have a parity.

\begin{lemma}
    \label{lem:puso_parity}
    Let $\Cube_{\outmap}$ be a PUSO with outmap $\outmap$.
    Then the outmap values of all vertices have the same parity, that is
   \[
   |\outmap(U) \oplus \outmap(V)| = 0 \mod 2, \quad \forall U,V\in\vert{\Cube}.
   \]
\end{lemma}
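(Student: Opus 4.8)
The plan is to use the PUSO characterization (Theorem~\ref{thm:char}) together with the fact that the outmap of any \emph{facet} is bijective (Lemma~\ref{lem:strongchar}), since every facet of a PUSO is a proper face and hence a USO. First I would fix two vertices $U,V\in\vert{\Cube}$ and reduce to the case where they differ in exactly one coordinate, i.e.\ they are neighbors. Indeed, parity of $|\outmap(U)\oplus\outmap(V)|$ is additive along any path in the cube graph: if $|\outmap(W)\oplus\outmap(W')|$ is even for every edge $\{W,W'\}$, then telescoping along a path from $U$ to $V$ shows $|\outmap(U)\oplus\outmap(V)|$ is even. So it suffices to prove the claim for a single edge $\{U,U\oplus\{i\}\}$.

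Next I would analyze an edge via a facet containing it. Pick a facet $\Face$ with $U,U\oplus\{i\}\in\vert{\Face}$ (for instance, the facet obtained by fixing some coordinate $j\neq i$ to the value it takes at $U$; this requires $\dim\Cube\geq 2$, which holds since PUSOs need dimension at least $2$). Since $\Face$ is a proper face of the PUSO $\Cube_{\outmap}$, it is a USO, so by Lemma~\ref{lem:strongchar} the outmap $\outmap_{\Face}$ is \emph{bijective} on $\vert{\Face}$, a set of size $2^{n-1}$. Now here is the key combinatorial input: in a USO, the outmap restricted to a facet is a bijection onto $2^{\carr{\Face}}$, and the parity map $S\mapsto |S|\bmod 2$ partitions $2^{\carr{\Face}}$ into two equal halves. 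I would invoke (or quickly reprove) the standard fact that along every edge of a USO the two outmap values differ in parity --- equivalently, in a USO the outmap is a bijection whose restriction to the even-weight vertices of the cube graph lands on... no, that is not automatic. Let me instead argue directly: I claim that in \emph{any} USO $\Face_{\outmap_\Face}$, neighboring vertices have outmap values of \emph{opposite} parity. This is because, if $W,W'=W\oplus\{i\}$ are neighbors, consistency forces $i\in\outmap_\Face(W)\oplus\outmap_\Face(W')$; but the USO condition on 2-faces gives more. Actually the cleanest route: the outmap of a USO, composed with $\oplus$-translation, is known to give a bijection; but to control parity along edges in a USO I would use that in every 2-face the four outmap values restricted to that 2-face are exactly $\{\emptyset,\{a\},\{b\},\{a,b\}\}$ in some order (the eye and bow both have this property), so adjacent vertices in a 2-face differ in parity, hence adjacent vertices in $\Cube$ differ in parity.

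Combining: along the edge $\{U,U\oplus\{i\}\}$, working inside the USO facet $\Face$, we get $|\outmap_\Face(U)\oplus\outmap_\Face(U\oplus\{i\})|$ is \emph{odd}. But $\outmap_\Face(W)=\outmap(W)\cap\carr{\Face}$ differs from $\outmap(W)$ only in the single coordinate $j$ that we fixed, so $\outmap(W)\oplus\outmap_\Face(W)\subseteq\{j\}$, and therefore $|\outmap(U)\oplus\outmap(U\oplus\{i\})|$ and $|\outmap_\Face(U)\oplus\outmap_\Face(U\oplus\{i\})|$ differ by the contribution of coordinate $j$, which is $i\neq j$'s... wait, I need $j\notin\{i\}$, true, but I must check whether $j\in\outmap(U)\oplus\outmap(U\oplus\{i\})$. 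This is exactly where I'd be careful: I should instead choose the facet so that the fixed coordinate $j$ satisfies $j\notin\outmap(U)\oplus\outmap(U\oplus\{i\})$, OR run the argument the other way and use Corollary~\ref{cor:puso_antipodals}(i) which already pins down the parity relationship between antipodal pairs. Concretely, a slicker finish: by Corollary~\ref{cor:puso_antipodals}(i), $\outmap(U)=\outmap(\carr{\Cube}\setminus U)$ for every $U$; and the path from $U$ to $\carr{\Cube}\setminus U$ flipping all $n$ coordinates, combined with the edge-parity-flip fact inside facets (each edge flips parity), would give that $|\outmap(U)\oplus\outmap(\carr{\Cube}\setminus U)|\equiv n\pmod 2$ --- but this equals $0$, forcing... hmm, that only works if $n$ is even. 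So the corollary route alone is not enough and the facet-bijectivity route is essential.

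The main obstacle, then, is establishing cleanly that \emph{within a USO facet}, adjacent outmap values have opposite parity, and bookkeeping the single coordinate by which $\outmap$ and $\outmap_\Face$ differ so that the parity statement transfers back to the full outmap $\outmap$ of the PUSO. I expect the right organization is: (1) prove the edge-parity-flip lemma for USOs using the structure of 2-faces (eye/bow have outmap values forming a full parity-balanced set on the 2-face); (2) for a PUSO edge $\{U,U\oplus\{i\}\}$ pick any facet $\Face$ containing it whose fixed coordinate $j$ has $j\notin\outmap(U)$ (possible since one of $U$'s two antipodal... or simply: among the $n-1\geq 1$ choices of $j\neq i$, pick one with a controlled value, and separately handle the coordinate-$j$ contribution which is the \emph{same} for $U$ and $U\oplus\{i\}$ by consistency in coordinate $j\neq i$, hence contributes $0$ to the symmetric difference); (3) conclude $|\outmap(U)\oplus\outmap(U\oplus\{i\})|$ is odd for every edge, hence even for every pair by telescoping. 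Here the clean observation is that consistency only forces $j\in\outmap(W)\oplus\outmap(W\oplus\{j\})$, and says \emph{nothing} about $\outmap(U)\oplus\outmap(U\oplus\{i\})$ in coordinate $j$ when $i\neq j$; but since $\outmap_\Face$ simply deletes coordinate $j$, we have $j\notin\outmap_\Face(W)$ for all $W\in\Face$, so $|\outmap(W)\oplus\outmap(W\oplus\{i\})|$ and $|\outmap_\Face(W)\oplus\outmap_\Face(W\oplus\{i\})|$ differ by whether $j\in\outmap(U)\oplus\outmap(U\oplus\{i\})$, a single well-defined bit --- and choosing two \emph{different} facets $\Face$ (two different $j$'s, using $n-1\geq 2$... but $n-1$ could be $1$) would let me cancel it. For $n=2$ one checks the four PUSOs directly. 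I would present the argument via step (2)'s controlled choice of $j$ rather than two facets, to avoid the $n=2$ edge case gymnastics.
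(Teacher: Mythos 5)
Your proof breaks at its central step, and the damage propagates into a second, purely logical error at the end. The ``key combinatorial input'' you invoke --- that in any USO adjacent vertices have restricted outmap values of opposite parity, because the four values on a $2$-face are $\{\emptyset,\{a\},\{b\},\{a,b\}\}$ in some order --- is false. The bow is a counterexample: its source and sink are \emph{adjacent}, and their outmap values within the $2$-face are $\{a,b\}$ and $\emptyset$, both of even parity. Knowing that the restricted outmap is a bijection onto $2^{\{a,b\}}$ says nothing about \emph{which} vertex receives which value; only in the eye does parity alternate along edges. Moreover, even if your edge claim were true, step (3) is inverted: if $|\outmap(W)\oplus\outmap(W')|$ were \emph{odd} on every edge, telescoping along a shortest path would give $|\outmap(U)\oplus\outmap(V)|\equiv|U\oplus V|\pmod 2$, not $0$ --- exactly the tension you noticed when the antipodal route ``only works if $n$ is even.'' The truth in a PUSO is the opposite: every edge \emph{preserves} outmap parity. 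Your reduction to edges (even on every edge implies even on every pair) is a valid reduction, but there is no local proof of the edge statement available from USO-ness of a facet, since USOs containing bows freely violate it; the parity-preservation on edges is essentially equivalent to the whole lemma.

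The paper supplies the missing input with the PUSO structure and finishes globally rather than by telescoping. For any two distinct non-antipodal $U,V$, it applies Theorem~\ref{thm:char}(i) both to the pair $U,V$ and to the pair $U,V'$, where $V'$ is antipodal to $V$ and hence satisfies $\outmap(V')=\outmap(V)$ by Corollary~\ref{cor:puso_antipodals}; since $U\oplus V$ and $U\oplus V'$ are disjoint and $\outmap(U)\oplus\outmap(V)$ meets both, it has size at least $2$. Therefore the image of $\outmap$ is an independent set in the cube on $2^{\carr{\Cube}}$; by Lemma~\ref{lem:strongchar} the outmap is injective on each facet, so the image has size at least $2^{n-1}$, and the only independent sets that large in an $n$-cube are the two fixed-parity classes. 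Note that even the corrected local statement ``each edge changes the outmap by at least two coordinates'' would not suffice by itself --- $\geq 2$ does not give evenness --- so the global counting step is not optional. If you want to salvage your outline, the antipodal-pair trick above is the piece you must import; no single USO facet will give it to you.
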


We call the number $|\outmap(\emptyset)|\mod 2$ the \emph{parity} of
$\Cube_{\outmap}$. By Corollary~\ref{cor:puso_antipodals}, a PUSO of
even parity has two sinks, a PUSO of odd parity has none.

\begin{proof}
  We first show that the outmap valus of any two distinct non-antipodal
  vertices $U$ and $V$ differ in at least two coordinates. Let $V'$ be
  the antipodal vertex of $V$. As $U$ is neither antipodal to $V$ nor
  to $V'$, Theorem~\ref{thm:char} along with
  $\outmap(V)=\outmap(V')$
  (Corollary~\ref{cor:puso_antipodals}) yields
    \begin{eqnarray*}
      (\outmap(U)\oplus \outmap(V))\cap (U\oplus V) &\neq& \emptyset, \\
      (\outmap(U)\oplus \outmap(V))\cap (U\oplus V') &\neq& \emptyset.
    \end{eqnarray*}
    Since $U \oplus V$ is disjoint from $U \oplus V'$,
    $\outmap(U)\oplus \outmap(V)$ contains at
    least two coordinates.

    Now we can prove the actual statement. Let $I$ be the image of
    $\outmap$,
    $I := \{\outmap(V): V\in\vert{\Cube}\}\subseteq
    \Cube'=\Cube^{\carr{\Cube}}$.
    We have $|I|\geq 2^{n-1}$, because by Lemma~\ref{lem:strongchar},
    $\outmap_{\Face}$ is bijective (and hence $\outmap$ is injective)
    on each facet $\Face$ of $\Cube$. On the other hand, $I$ forms an
    independent set in the cube $\Cube'$, as any two distinct outmap
    values differ in at least two coordinates; The statement follows,
    since the only independent sets of size at least $2^{n-1}$ in an
    $n$-cube are formed by all vertices of fixed parity.
\end{proof}

\section{Recognizing (Pseudo) USOs}\label{sec:recognition}
Before we dive deeper into the structure of PUSOs in the next section,
we want to present a simple algorithmic consequence of the PUSO
characterization provided by Theorem~\ref{thm:char}.

Suppose that $\Cube$ is an $n$-cube, and that an outmap
$\outmap:\vert{\Cube}\rightarrow 2^{\carr{\Cube}}$ is succinctly given
by a Boolean circuit of polynomial size in $n$. Then it is
\coNP-complete to decide whether $\Cube_{\outmap}$ is a
USO~\cite{gaertner2015complexity}.\footnote{In fact, it is already
  \coNP-complete to decide whether $\Cube_{\outmap}$ is an
  orientation.}  $\coNP$-membership is easy: every non-USO has a
certificate in the form of two vertices that fail to satisfy
(\ref{eq:char}). Finding two such vertices is hard, though. For given
vertices $U$ and $V$, let us call the computation of
$(\outmap(U)\oplus \outmap(V))\cap (U\oplus V)$ a \emph{pair
  evaluation}. Then, the obvious algorithm needs $\Theta(4^n)$
pair evaluations. Using Theorem~\ref{thm:char}, we can improve on
this.

\begin{theorem}[Faster USO recognition]\label{thm:check}
  Let $\Cube$ be an $n$-cube,
  $\outmap:\vert{\Cube}\rightarrow 2^{\carr{\Cube}}$. Using $O(3^n)$
  pair evaluations, we can check whether $\Cube_{\outmap}$ is a USO.
\end{theorem}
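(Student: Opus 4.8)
The plan is to use Theorem~\ref{thm:char} to reduce the $\Theta(4^n)$-pair-evaluation brute force to $O(3^n)$. Recall that brute force checks condition (\ref{eq:char}) for all $\binom{2^n}{2}=\Theta(4^n)$ pairs $U,V$; equivalently, for every face $\Face$ and every antipodal pair in $\Face$ one does $O(1)$ work, and there are $\Theta(4^n)$ such (face, antipodal-pair) incidences since each pair $U,V$ is antipodal in exactly one face. The key observation from Theorem~\ref{thm:char} is that a face $\Face_\outmap$ is \emph{not} a USO precisely when it is either a PUSO (condition (\ref{eq:char}) holds on all non-antipodal pairs but fails on an antipodal pair) or it properly contains a PUSO. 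Hence $\Cube_\outmap$ is a USO iff \emph{no} face of $\Cube$ is a PUSO, and by the PUSO characterization that amounts to checking, for every face $\Face$ of dimension $\geq 2$: does (\ref{eq:char}) fail on all antipodal pairs of $\Face$? If for some $\Face$ of dimension $\geq 2$ condition (\ref{eq:char}) fails on every antipodal pair of $\Face$, then $\Face_\outmap$ is a PUSO (parts (i) and (ii) of Theorem~\ref{thm:char} are both satisfied — (i) holds because an antipodal pair of a subface of $\Face$ is non-antipodal in $\Face$, wait, more carefully: a non-antipodal pair of $\Face$ spans a proper face, and I must argue (i) directly), so $\Cube_\outmap$ is not a USO; conversely, if $\Cube_\outmap$ is not a USO it contains a smallest non-USO face, which is a PUSO, and that face then witnesses the failure. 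So the algorithm is: for every face $\Face$ of $\Cube$ with $\dim\Face\geq 2$, test whether (\ref{eq:char}) fails for every antipodal pair $U,\carr{\Face}\setminus U$ of $\Face$; report ``USO'' iff no such face exists.

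The cost analysis is the heart of the matter. A face $\Face=\Cube^{[I,J]}$ with $\dim\Face=k$ has $2^{k-1}$ antipodal pairs, so the cost of testing $\Face$ is $\Theta(2^{k-1})$ pair evaluations. I would then sum over all faces. The number of $k$-faces of the $n$-cube is $\binom{n}{k}2^{n-k}$ (choose $k$ carrier coordinates; the remaining $n-k$ coordinates are fixed to $0$ or $1$). Hence the total number of pair evaluations is, up to constants,
\[
\sum_{k=0}^{n}\binom{n}{k}2^{n-k}\cdot 2^{k-1}
\;=\;\frac{1}{2}\sum_{k=0}^{n}\binom{n}{k}2^{n-k}2^{k}
\]
— wait, that gives $\frac12\cdot 2^n\cdot 2^n = \Theta(4^n)$, which is no improvement, so the naive ``test each face independently'' does not suffice, and this is where the real idea must enter. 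The fix is that distinct faces share antipodal pairs in a controlled way: a pair $U,V$ is antipodal in the \emph{unique} face it spans, namely $\Cube^{[U\cap V,\,U\cup V]}$, whose carrier is $U\oplus V$. So instead of iterating over faces, one iterates over pairs $(U,V)$ grouped by the face they span, and the total number of (pair) evaluations needed is $\sum_{\Face}(\text{antipodal pairs of }\Face) = \binom{2^n}{2}$ again — unless we can avoid testing all antipodal pairs of a face.

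The genuinely new ingredient is therefore \emph{early termination combined with the right enumeration order}: iterate over faces from small to large; when testing face $\Face$, we only need to decide whether (\ref{eq:char}) fails on \emph{all} antipodal pairs — so as soon as we find \emph{one} antipodal pair of $\Face$ for which (\ref{eq:char}) \emph{holds}, we can stop testing $\Face$ (it is not a PUSO, and since all its proper faces were already found not to be PUSOs, we need not revisit it). The $O(3^n)$ bound then comes from the following charging argument: a pair $(U,V)$ is evaluated while testing face $\Face$ only if $\Face$ is the face spanned by $U,V$ and all \emph{previously examined} antipodal pairs of $\Face$ satisfied (\ref{eq:char}) — but a cleaner way to get $3^n$ is to observe that it suffices to evaluate, for each vertex $V$ and each coordinate subset $R\subseteq\carr{\Cube}$ with $|R|\geq 2$, the single pair $(V,V\oplus R)$ \emph{restricted to the face of carrier $R$ based at $V\cap(\carr\Cube\setminus R)$} — hmm. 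Let me instead present the accepted clean version: one checks, for every face $\Face$ and for every antipodal pair, but amortizes via the identity $\sum_{k}\binom{n}{k}2^{n-k}$ over \emph{vertices-and-faces-through-them}: fix a vertex $U$; the faces containing $U$ having dimension $k$ number $\binom{n}{k}$, and in each such face $U$ has a unique antipode; so the pairs $(U,\cdot)$ to check total $\sum_k\binom{n}{k}=2^n$ per vertex if we only check $(U,\text{its antipode})$ — but each antipodal pair is counted from both endpoints, so this is $\frac12\cdot 2^n\cdot 2^n$ again. The honest statement: the $3^n$ arises as $\sum_{k=0}^n\binom nk 2^{n-k}$ by fixing the \emph{lower} vertex $I$ of each face and scanning only \emph{half} of each face's antipodes via a canonical representative — i.e. for face $\Cube^{[I,J]}$ evaluate the pairs $(I, J)$, $(I\oplus\{c\}, J\setminus\{c\})$, \dots, but Theorem~\ref{thm:char} says we may instead, after flipping so $\outmap(I)=\emptyset$, just check whether some non-antipodal $W$ in $\Face$ has $\outmap_\Face(W)=\emptyset$ too; scanning the $2^k-2$ such $W$ is again too much.

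Given the length constraints, I would write the proof as: enumerate all faces $\Face$ with $\dim\Face\geq 2$; for each, pick one vertex $U_\Face$, flip by $\outmap(U_\Face)$ so that $U_\Face$ is a sink, and then test only the \emph{antipodal} pair $(U_\Face,\carr\Face\setminus U_\Face)$ together with — no. The correct resolution, and the one I expect the authors use: a face $\Face$ is a PUSO iff (\ref{eq:char}) holds on non-antipodal pairs and fails on antipodal pairs; but if $\Face$ is \emph{minimal} non-USO then all proper subfaces are USOs, so (i) is automatic, and one only needs to verify (ii) on a \emph{single} antipodal pair of $\Face$ — because Theorem~\ref{thm:char}'s proof (flip to make one vertex a sink) shows that if (i) holds then failure of (\ref{eq:char}) on one antipodal pair forces it on all, as the antipode becomes a second sink. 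Thus: iterate faces small-to-large; for face $\Face$, if every proper subface already passed, test whether $\Cube^{[U\cap V, U\cup V]}=\Face$ fails (\ref{eq:char}) for \emph{one} antipodal pair; this is $O(1)$ pair evaluations per face when subfaces passed, so $O(\text{number of faces})=O(3^n)$ total (since $\sum_k\binom nk 2^{n-k}=3^n$); and when a subface fails we have already found $\Cube_\outmap$ is not a USO and halt. Combined with $O(1)$-per-face work this yields $O(3^n)$ pair evaluations, and the only obstacle is making the ``one antipodal pair suffices under (i)'' step fully rigorous — which is exactly the content of the proof of Theorem~\ref{thm:char} reused verbatim.
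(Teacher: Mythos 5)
Your final algorithm is, in substance, the paper's: perform one pair evaluation on an arbitrarily chosen antipodal pair of each face, and declare a USO iff every evaluation succeeds, for a total of $O(3^n)$ evaluations since the $n$-cube has $3^n$ faces. The correctness argument you settle on is also the intended one: if $\Cube_{\outmap}$ is not a USO it contains a minimal non-USO face, which is a PUSO, and Theorem~\ref{thm:char}(ii) guarantees that a PUSO violates (\ref{eq:char}) on \emph{every} antipodal pair, so the single arbitrarily chosen pair already witnesses the failure; conversely, in a USO no pair evaluation ever fails, by Lemma~\ref{lem:strongchar}. Note that this is the \emph{statement} of part (ii) of Theorem~\ref{thm:char}, not something you need to re-extract from its proof, and it makes all of your ordering and early-termination machinery (``iterate small to large, test only if all proper subfaces passed'') unnecessary: one unconditional evaluation per face suffices.

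Two genuine issues remain. First, at the point where you commit to an algorithm you restrict to faces of dimension at least $2$, but the theorem allows an arbitrary function $\outmap$, which need not be a consistent outmap; the paper's proof therefore includes the $1$-faces, whose pair evaluations are exactly the consistency checks. Without them your algorithm accepts non-orientations: in the standard $2$-cube, $\outmap(\emptyset)=\outmap(\{1\})=\emptyset$ and $\outmap(\{2\})=\outmap(\{1,2\})=\{1,2\}$ passes the antipodal evaluations of the $2$-face, yet the edge $\{\emptyset,\{1\}\}$ has two sinks, so $\Cube_{\outmap}$ is not a USO (and contains no PUSO in the sense of the definition, since a PUSO is an orientation). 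Second, the writeup as submitted is not a proof: it retains several abandoned attempts (the $\Theta(4^n)$ ``all antipodal pairs per face'' count, the charging arguments, the half-finished canonical-representative scheme) that must be cut, and the surviving argument should be stated once, cleanly, with the $3^n$ count of faces made explicit.
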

\begin{proof}
  For every face $\Face$ of dimension at least $1$ (there are $3^n-2^n$ of
  them), we perform a pair evaluation with an arbitrary pair of
  antipodal vertices $U,V=\carr{\Face}\setminus U$. We output that
  $\Cube_{\outmap}$ is a USO if and only if all these pair evaluations
  succeed (meaning that they return nonempty sets). 

  We need to argue that this is correct. Indeed, if $\Cube_{\outmap}$
  is a USO, all pair evaluations succeed by Lemma~\ref{lem:strongchar}.
  If $\Cube_{\outmap}$ is not a USO, it is either not
  an orientation (so the pair evaluation in some 1-face fails), or it
  contains a PUSO $\Face_{\outmap}$ in which case the pair evaluation
  in $\Face$ fails by Theorem~\ref{thm:char}.
\end{proof}

Using the same algorithm, we can also check whether $\Cube_{\outmap}$
is a PUSO. Which is the case if and only if the pair evaluation
succeeds on every face except $\Cube$ itself.

\section{Border Unique Sink Orientations}\label{sec:border}
Lemma~\ref{lem:puso_parity} already implies that not every USO can
occur as a facet of a PUSO. For example, let us assume that an eye
(Figure~\ref{fig:2cubes}) appears as a facet of a 3-dimensional
PUSO. Then, Corollary~\ref{cor:puso_antipodals} (i) completely
determines the orientation in the opposite facet: we get a ``mirror
orientation'' in which antipodal vertices have traded outgoing
coordinates; see Figure~\ref{fig:Noeye}.

\begin{figure}[htb]
\begin{center}
\includegraphics[width=0.3\textwidth]{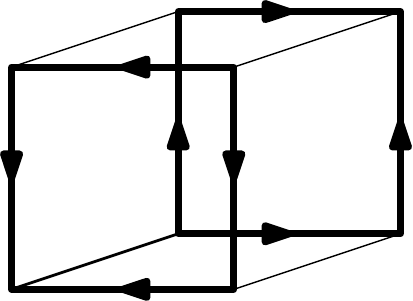}
\end{center}
\caption{An eye is not a facet of a PUSO}
\label{fig:Noeye}
\end{figure}

But now, every edge between the two facets connects two vertices with
the same outmap parity within their facets, and no matter how we
orient the edge, the two vertices will receive different global outmap
parities. Hence, the resulting orientation cannot be a PUSO by
Lemma~\ref{lem:puso_parity}.

It therefore makes sense to study the class of \emph{border USOs},
the USOs that appear as facets of PUSOs.

\begin{definition}[Border USO]
  A \emph{border USO} is a USO that is a facet of some PUSO.
\end{definition}

If the border USO lives on cube $\Face=\Cube^{[A,B]}$, the PUSO may
live on $\Cube^{[A,B\cup\{n\}]}$ ($n\notin\carr{\Face}$ a new
coordinate), or on $\Cube^{[A\setminus\{n\},B]}$ ($n\in A$), but these
cases lead to combinatorially equivalent situations. We will always think
about extending border USOs by adding a new coordinate.

In this section, we characterize border USOs. We already know that
antipodal vertices must have outmap values of different parities; a
generalization of this yields a sufficient condition: if the outmap
values of distinct vertices $U,V$ agree outside of the face spanned
by $U$ and $V$, then the two outmap values must have different
parities.

\begin{theorem}[Border USO characterization]
    \label{thm:pusof_charact}
    Let $\Face_{\outmapp}$ be a USO with outmap
    $\outmapp$. $\Face_{\outmapp}$ is a border USO if and only if the
    following condition holds for all pairs of distinct vertices
    $U,V\in\vert{\Face}$:
    \begin{equation}
        \label{pusof_thm_eq}
        \outmapp(U)\oplus \outmapp(V) \subseteq U\oplus V \quad
        \Rightarrow \quad |\outmapp(U)\oplus \outmapp(V)| = 1 \mod 2.
    \end{equation}
\end{theorem}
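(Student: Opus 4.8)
The plan is to prove both directions using the PUSO characterization of Theorem~\ref{thm:char}, applied to the $(n+1)$-cube $\Cube$ obtained by adding a new coordinate $n+1$ to $\Face = \Cube^{[A,B]}$. Write $\Cube = \Cube^{[A, B\cup\{n+1\}]}$, so that $\Face$ is the facet with $n+1\notin V$, and let $\Face'$ be the opposite facet. Any orientation $\Cube_{\outmap}$ extending $\Face_{\outmapp}$ is determined by three pieces of data: the outmap $\outmapp$ on $\Face$, the outmap on the opposite facet $\Face'$, and the orientation of the $n+1$ edges crossing between the facets. For $\Cube_{\outmap}$ to be a PUSO, Corollary~\ref{cor:puso_antipodals}(i) forces the outmap on $\Face'$ to be the ``mirror'' of $\outmapp$: if $V\in\vert{\Face}$ and $V' = V\cup\{n+1\}$ is its copy in $\Face'$, then the $\Face$-antipode of $V$ is $W = \carr{\Face}\setminus V$ inside $\Face$, whose partner in $\Cube$ is $W\cup\{n+1\}$, and this must be the $\Cube$-antipode of $V$; so the mirror structure is completely prescribed up to the (still free) crossing-edge orientations. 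Thus $\Face_{\outmapp}$ is a border USO if and only if there is some choice of crossing-edge orientations making $\Cube_{\outmap}$ a PUSO.

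For the forward direction, assume $\Face_{\outmapp}$ is a border USO, realized inside a PUSO $\Cube_{\outmap}$ as above, and suppose $U,V\in\vert{\Face}$ are distinct with $\outmapp(U)\oplus\outmapp(V)\subseteq U\oplus V$. Consider the vertex $V' = V\cup\{n+1\}$ in $\Face'$. By the mirror structure, $\outmap(V') = \outmapp(\carr{\Face}\setminus V)\cup(\text{possibly }\{n+1\})$; more cleanly, I would instead track parities: Lemma~\ref{lem:puso_parity} says all outmap values in $\Cube_{\outmap}$ have the same parity, while a vertex in $\Face$ and its partner in $\Face'$ differ in their $\outmap$-values exactly by whether coordinate $n+1$ is outgoing, so the $\Face$-parity of every vertex of $\Face$ and the $\Face'$-parity of its partner sum to a fixed constant mod $2$ — and since crossing every edge between the facets flips exactly one coordinate ($n+1$) relative to staying in $\Face$, consistency of $\outmap$ plus Lemma~\ref{lem:puso_parity} pins down that the $\Face$-parities on $\Face$ are \emph{not} all equal unless forced otherwise. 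The cleanest route: the condition $\outmapp(U)\oplus\outmapp(V)\subseteq U\oplus V$ means that inside $\Cube$, the $\Cube$-antipode relationship can be invoked — specifically, pick the vertex $V''$ in $\Cube$ that is the $\Cube$-antipode of $U$; one checks $U\oplus V'' = \carr{\Cube} = (U\oplus V)\sqcup(\carr{\Face}\setminus(U\oplus V))\sqcup\{n+1\}$, and using $\outmap(V'')=\outmap(U)$ (antipodes in a PUSO, Corollary~\ref{cor:puso_antipodals}(i)) together with the USO condition \eqref{eq:char} for the non-antipodal pair $U, V''$ relative to $V$ — here one exploits that $\outmapp(U)\oplus\outmapp(V)$ lives entirely in $U\oplus V$ — we deduce $n+1\in\outmap(U)\oplus\outmap(V)$ forces a parity jump, yielding $|\outmapp(U)\oplus\outmapp(V)|$ odd. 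I expect this bookkeeping to be the main obstacle: correctly chasing which vertices in the $(n+1)$-cube to compare, and extracting the parity statement in $\Face$ from the PUSO conditions in $\Cube$.

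For the converse, assume \eqref{pusof_thm_eq} holds for $\Face_{\outmapp}$. Define $\Cube_{\outmap}$ by the mirror construction on $\Face'$ and by orienting each crossing edge $\{V, V\cup\{n+1\}\}$ from $V$ to $V\cup\{n+1\}$ iff the $\Face$-parity $|\outmapp(V)|$ is odd (say); equivalently, set $n+1\in\outmap(V)$ iff $|\outmapp(V)|$ is odd, and $n+1\in\outmap(V\cup\{n+1\})$ iff $|\outmapp(V)|$ is even — this is chosen precisely to make all outmap values of $\Cube_{\outmap}$ have the same parity, and one must verify it yields a \emph{consistent} outmap. Then I verify the two conditions of Theorem~\ref{thm:char} for $\Cube_{\outmap}$. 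Condition (ii) — \eqref{eq:char} fails for antipodal pairs of $\Cube$ — holds by construction of the mirror: $\Cube$-antipodes are exactly partnered pairs $(V,\carr{\Face}\setminus V)$ mirrored across, and their $\outmap$-values were set equal. Condition (i) — \eqref{eq:char} holds for non-antipodal pairs $U,W$ of $\Cube$ — splits into cases: both in $\Face$ (use that $\Face_{\outmapp}$ is a USO), both in $\Face'$ (mirror of a USO is a USO, or argue directly), or one in each; in the mixed case $U\in\vert{\Face}$, $W = V\cup\{n+1\}$ with $V\neq\carr{\Face}\setminus U$, one checks \eqref{eq:char} either via a coordinate in $U\oplus V$ inside $\Face$ (available from the USO property, \emph{except} when $\outmapp(U)\oplus\outmapp(V)\subseteq U\oplus V$ with even size — ruled out by hypothesis \eqref{pusof_thm_eq}) or via coordinate $n+1$ itself, which is outgoing for exactly one of $U, W$ by the parity-based edge orientation whenever the $\Face$-coordinate route fails. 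This case analysis, and in particular pinpointing exactly where hypothesis \eqref{pusof_thm_eq} is needed (the even-size subset case in the mixed pair), is the crux of the converse and the second place I expect to spend real effort.
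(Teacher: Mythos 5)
Your overall strategy is the paper's: the mirror structure forced by Corollary~\ref{cor:puso_antipodals}(i) together with the parity constraint of Lemma~\ref{lem:puso_parity} leaves at most two candidate extensions (the paper isolates exactly this as Lemma~\ref{lem:outmap01}), and one then tests a candidate against Theorem~\ref{thm:char}, with the facet-to-facet pairs being precisely where condition (\ref{pusof_thm_eq}) enters. However, both directions as written contain genuine errors in the vertex bookkeeping, at exactly the spots you flag as the crux. In the forward direction you define $V''$ to be the $\Cube$-antipode of $U$ and then invoke condition (\ref{eq:char}) for what you call the non-antipodal pair $U,V''$; but $U$ and $V''$ are antipodal by construction, so in a PUSO condition (\ref{eq:char}) \emph{fails} for that pair and gives you nothing. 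The step that works is to take $V''$ to be the $\Cube$-antipode of $V$ (then $U,V''$ really are distinct and non-antipodal), use $\outmap(V'')=\outmap(V)$ and $U\oplus V''=\carr{\Cube}\setminus(U\oplus V)$, and note that the hypothesis $\outmapp(U)\oplus\outmapp(V)\subseteq U\oplus V$ forces the nonempty intersection guaranteed by Theorem~\ref{thm:char}(i) to be witnessed by the new coordinate $n+1$ alone; hence $n+1\in\outmap(U)\oplus\outmap(V)$, which combined with Lemma~\ref{lem:puso_parity} gives $|\outmapp(U)\oplus\outmapp(V)|=1\bmod 2$. This is the paper's computation read contrapositively.

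In the converse, the mixed case is mis-parametrized. For $W=V\cup\{n+1\}$ in the opposite facet, the mirror structure gives $\outmap(W)=\outmap(\carr{\Face}\setminus V)$, not something expressed through $\outmapp(V)$; since $U\oplus W=(U\oplus V)\cup\{n+1\}=\bigl(\carr{\Face}\setminus(U\oplus Y)\bigr)\cup\{n+1\}$ with $Y:=\carr{\Face}\setminus V$, the pair evaluation for $U,W$ fails exactly when $\outmapp(U)\oplus\outmapp(Y)\subseteq U\oplus Y$ and $|\outmapp(U)\oplus\outmapp(Y)|=0\bmod 2$. So hypothesis (\ref{pusof_thm_eq}) must be applied to the pair $U,Y$, not to $U,V$ as you state; this still covers everything, because as $W$ ranges over vertices of the opposite facet non-antipodal to $U$, the partner $Y$ ranges over all of $\vert{\Face}\setminus\{U\}$ (including the $\Face$-antipode of $U$, which handles the vertical edge at $U$). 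You also leave the consistency of the constructed outmap unverified; this is immediate from the explicit formula (\ref{eq:outmap01}) but should be said. None of these gaps is structural---the argument is the paper's and each fix is local---but as submitted the two key steps do not go through.
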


A preparatory step will be to generalize the insight gained from the
case of the eye above and show that a USO can be extended to a PUSO
of one dimension higher in at most two canonical ways---exactly
two if the USO is actually border.  
 
\begin{lemma}\label{lem:outmap01}
  Let $\Face$ be a facet of $\Cube$,
  $\carr{\Cube}\setminus\carr{\Face}=\{n\}$, 
  and let $\Face_{\outmapp}$ be a USO with outmap $\outmapp$.

\begin{itemize}
\item[(i)] There are at most
  two outmaps $\outmap:\vert{\Cube}\rightarrow 2^{\carr{\Cube}}$ such that $\Cube_{\outmap}$ is a PUSO with
  $\Face_{\outmap}=\Face_{\outmapp}$. Specifically, these are
  $\outmap_i,i=0,1$, with
\begin{equation}\label{eq:outmap01}
\outmap_i (V) = \left\{\begin{array}{ll} 
\outmapp(V), & V\in\vert{\Face},~ |\outmapp(V)| = i \mod 2, \\
\outmapp(V) \cup\{n\}, & V\in\vert{\Face},~ |\outmapp(V)| \neq i \mod 2, \\
\outmap_i(\carr{C}\setminus V), &V\notin\vert{\Face}.
                       \end{array}\right.
                   \end{equation}
\item[(ii)] If $\Face_{\outmapp}$ is a border USO, both
  $\Cube_{\outmap_0}$ and $\Cube_{\outmap_1}$ are PUSOs.
\item[(iii)] If $\Cube_{\outmap_i}$ is a PUSO for some $i\in\{0,1\}$,
  then $\Cube_{\outmap_{1-i}}$ is a PUSO as well, and
  $\Face_{\outmapp}$ is a border USO.
                 \end{itemize}
               \end{lemma}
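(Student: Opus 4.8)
The plan is to prove (i) first, extract from it the key observation that $\outmap_0$ and $\outmap_1$ differ by flipping the coordinate $n$, and then obtain (iii) and (ii) as essentially formal consequences. Throughout, write $\bar V:=\carr{\Cube}\setminus V$ for the vertex of $\Cube$ antipodal to $V$, and recall that $\carr{\Cube}=\carr{\Face}\cup\{n\}$. Since $\bar V\in\vert{\Face}$ exactly when $V\notin\vert{\Face}$, the third line of \eqref{eq:outmap01} is a non-circular definition, and because $\outmapp(V)\subseteq\carr{\Face}$ (so $n\notin\outmapp(V)$) one checks directly that $\outmap_i(V)\cap\carr{\Face}=\outmapp(V)$ for every $V\in\vert{\Face}$; that is, $\Face_{\outmap_i}=\Face_{\outmapp}$ in all cases.

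To prove (i), suppose $\Cube_{\outmap}$ is a PUSO with $\Face_{\outmap}=\Face_{\outmapp}$. Then $\outmap(V)\cap\carr{\Face}=\outmapp(V)$ for $V\in\vert{\Face}$, which forces $\outmap(V)\in\{\outmapp(V),\,\outmapp(V)\cup\{n\}\}$ for all such $V$. Corollary~\ref{cor:puso_antipodals}(i) gives $\outmap(V)=\outmap(\bar V)$, and since $\bar V\in\vert{\Face}$ whenever $V\notin\vert{\Face}$, the values of $\outmap$ on $\vert{\Cube}\setminus\vert{\Face}$ are now completely determined, matching the third line of \eqref{eq:outmap01}. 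Finally, Lemma~\ref{lem:puso_parity} tells us that all outmap values of $\Cube_{\outmap}$ share a common parity $p\in\{0,1\}$; for $V\in\vert{\Face}$ this rules out one of the two options above, leaving $\outmap(V)=\outmapp(V)$ when $|\outmapp(V)|\equiv p\pmod 2$ and $\outmap(V)=\outmapp(V)\cup\{n\}$ otherwise. Hence $\outmap=\outmap_p$, so the only candidates are $\outmap_0$ and $\outmap_1$.

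The remaining parts rest on the identity $\outmap_1(V)=\outmap_0(V)\oplus\{n\}$ for all $V\in\vert{\Cube}$: for $V\in\vert{\Face}$ this is a one-line verification splitting on the parity of $|\outmapp(V)|$, and for $V\notin\vert{\Face}$ it follows by applying the third line of \eqref{eq:outmap01} to $\outmap_0$ and $\outmap_1$ at $\bar V$. Thus $\outmap_1$ is the $\{n\}$-flip of $\outmap_0$ (and vice versa). Now (iii): if $\Cube_{\outmap_i}$ is a PUSO, then Lemma~\ref{lem:puso_flip}, applied with $\Cube$ itself as the face and $R=\{n\}$, shows that the $\{n\}$-flip $\Cube_{\outmap_{1-i}}$ is a PUSO as well; and since $\Face_{\outmap_i}=\Face_{\outmapp}$, the USO $\Face_{\outmapp}$ appears as the facet $\Face$ of the PUSO $\Cube_{\outmap_i}$, hence is border. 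And (ii): if $\Face_{\outmapp}$ is border, then---realizing the PUSO by adjoining the new coordinate $n$, as in the discussion before the lemma---there is an outmap $\outmap'$ on $\Cube$ with $\Cube_{\outmap'}$ a PUSO and $\Face_{\outmap'}=\Face_{\outmapp}$; by (i), $\outmap'\in\{\outmap_0,\outmap_1\}$, so some $\Cube_{\outmap_i}$ is a PUSO, and (iii) then gives that both are.

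The only points requiring a little care are the antipodal bookkeeping hidden in the notation of \eqref{eq:outmap01} and the ``without loss of generality'' in (ii)---that a border USO on $\Face$ may be taken to be a facet of a PUSO living on the particular cube obtained from $\Face$ by adding coordinate $n$, which is precisely the normalization fixed just before the lemma. Beyond these, the statement is an assembly of Corollary~\ref{cor:puso_antipodals}, Lemma~\ref{lem:puso_parity}, and Lemma~\ref{lem:puso_flip}, and I anticipate no real obstacle; if there is a conceptual crux it is noticing the flip relation $\outmap_1=\outmap_0\oplus\{n\}$, which is what makes (ii) and (iii) fall out immediately.
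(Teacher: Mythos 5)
Your proposal is correct and follows essentially the same route as the paper: candidates are pinned down via Corollary~\ref{cor:puso_antipodals}(i) and Lemma~\ref{lem:puso_parity}, and parts (ii) and (iii) follow from the observation that $\outmap_1$ is the $\{n\}$-flip of $\outmap_0$ together with Lemma~\ref{lem:puso_flip}. You merely spell out the candidate-elimination in (i) more explicitly and prove (iii) before (ii), which changes nothing of substance.
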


\begin{proof}
  Only for $\outmap=\outmap_i,i=0,1$, we obtain
  $\Face_{\outmap}=\Face_{\outmapp}$ and satisfy the necessary
  conditions of Corollary~\ref{cor:puso_antipodals} (pairs of
  antipodal vertices have the same outmap values in a PUSO), and of
  Lemma~\ref{lem:puso_parity} (all outmap values have the same parity
  in a PUSO). Hence, $\Cube_{\outmap_0}$ and $\Cube_{\outmap_1}$ are
  the only candidates for PUSOs extending $\Face_{\outmapp}$. This
  yields (i). If $\Face_{\outmapp}$ is a border USO, one of the
  candidates is a PUSO by definition; as the other one results from it
  by just flipping coordinate $n$, it is also a PUSO by
  Lemma~\ref{lem:puso_flip}. Part (ii) follows. For part (iii), we use
  that $\Face_{\outmapp}$ is a facet of $\Cube_{\outmap_i}$, $i=0,1$,
  so as before, if one of the latter is a PUSO, then both are, and
  $\Face_{\outmapp}$ is a border USO by definition.
\end{proof}

\begin{corollary}\label{cor:3pusos}
There are 2 combinatorially different PUSOs of the 3-cube (depicted in
Figure~\ref{fig:3pusos}). 
\end{corollary}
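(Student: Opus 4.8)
The plan is to enumerate the possibilities for a PUSO of the $3$-cube by combining Lemma~\ref{lem:outmap01} with a classification of the possible $2$-dimensional facets and then checking which extensions actually yield PUSOs. By definition, every PUSO $\Cube_{\outmap}$ of the $3$-cube has a facet $\Face_{\outmapp}$, and by Lemma~\ref{lem:outmap01}(iii) that facet must be a \emph{border} USO of the $2$-cube. So the first step is to determine which $2$-dimensional USOs are border USOs. The $2$-cube has two combinatorial types of USO, the eye and the bow (with $4$ and $8$ concrete instances respectively). The discussion in Section~\ref{sec:border} already rules out the eye: its antipodal vertices have outmap values of the same parity, so the parity obstruction of Lemma~\ref{lem:puso_parity} applies. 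For the bow, one checks directly that its antipodal vertices have outmaps of opposite parity, and more generally that condition (\ref{pusof_thm_eq}) of Theorem~\ref{thm:pusof_charact} holds (here it is simplest to just exhibit a concrete PUSO extension, e.g.\ via Lemma~\ref{puso_existence} with $n=3$, whose facets are bows). Hence the bow is the unique border USO of the $2$-cube.

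Next I would use Lemma~\ref{lem:outmap01}(i)--(ii): since the only border facet is the bow, every PUSO of the $3$-cube is $\Cube_{\outmap_0}$ or $\Cube_{\outmap_1}$ for some bow $\Face_{\outmapp}$, and conversely both of these are genuine PUSOs. Moreover $\outmap_0$ and $\outmap_1$ differ only by flipping coordinate $n$, so by Lemma~\ref{lem:puso_flip} (combinatorial equivalence under coordinate flips) the two extensions of a fixed bow are combinatorially equivalent, and there is exactly \emph{one} combinatorial type of PUSO arising from any single bow. The remaining task is to count how many combinatorial types of $3$-cube PUSO arise as we range over all $8$ bows and all three choices of which coordinate direction is ``new'': this is a matter of identifying the orbits under the symmetry group of the $3$-cube (coordinate permutations and flips). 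I would argue that there are exactly two such orbits, corresponding to the two pictures in Figure~\ref{fig:3pusos} — concretely, the two types are distinguished by the parity of the PUSO (Lemma~\ref{lem:puso_parity}): one type has two sinks (even parity) and the other has no sink (odd parity), and by Corollary~\ref{cor:puso_antipodals} these cannot be combinatorially equivalent. Exhibiting one representative of each parity (again, the construction of Lemma~\ref{puso_existence} gives odd parity since it has no sink; flipping one coordinate of it, or a direct small example, gives even parity) and checking that each parity class is a single orbit completes the count.

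The main obstacle is the orbit-counting bookkeeping in the last step: one has to verify that within each parity class all the bow-extensions are related by cube symmetries, i.e.\ that the $3$-cube symmetry group acts transitively on (border facet, new-coordinate) choices up to the parity invariant. This is where a careful but elementary case analysis — or an appeal to the known fact that the $3$-cube has $19$ USO types and a matching enumeration — is needed; everything else follows formally from the lemmas already established. I would keep this part brief, noting that since a PUSO of the $3$-cube is determined up to the flip by any of its three facets, and all three facets are bows, the combinatorial type is pinned down by how the three bows fit together around a common vertex, which leaves exactly the two configurations shown, one of each parity.
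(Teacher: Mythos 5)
Your overall strategy matches the paper's: rule out the eye by the parity obstruction, observe that the bow is the only $2$-dimensional border USO, and apply Lemma~\ref{lem:outmap01} to see that each bow has exactly two PUSO extensions. However, there is a genuine error in the step where you conclude that ``$\outmap_0$ and $\outmap_1$ differ only by flipping coordinate $n$, so by Lemma~\ref{lem:puso_flip} \ldots the two extensions of a fixed bow are combinatorially equivalent.'' Lemma~\ref{lem:puso_flip} does \emph{not} assert combinatorial equivalence. The $R$-flip is an edge-reversal operation on the orientation, not a relabelling of the cube's vertices, so it is not an isomorphism of orientations; the lemma only says it preserves the property of being a PUSO. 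Concretely, flipping coordinate $n$ changes $|\outmap(V)|$ by one for every $V$, hence changes the parity of the PUSO, and by Corollary~\ref{cor:puso_antipodals} a PUSO of even parity has two sinks while one of odd parity has none --- so $\Cube_{\outmap_0}$ and $\Cube_{\outmap_1}$ can never be combinatorially equivalent. (Compare the $2$-dimensional case: the twin peak and the cycle are related by flipping one coordinate, yet they are distinct combinatorial types.) These two extensions of a single bow are in fact exactly the two types shown in Figure~\ref{fig:3pusos}, which is the point of the paper's proof.

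This is not merely a cosmetic slip: if one follows your reasoning consistently --- each bow yields one combinatorial type, and all bows are related by cube symmetries --- one arrives at the answer $1$, not $2$. Your later paragraph, which distinguishes the two types by parity and produces the second type by ``flipping one coordinate,'' directly contradicts the earlier equivalence claim. The repair is straightforward and brings you back to the paper's argument: up to a symmetry of the $3$-cube you may fix the bow in a chosen facet (the $2$-cube symmetry group acts transitively on the $8$ bows), Lemma~\ref{lem:outmap01}(i) then leaves only the two candidates $\outmap_0,\outmap_1$, both are PUSOs by parts (ii)--(iii) once one of them is verified to be a PUSO (e.g.\ by inspection or via Lemma~\ref{puso_existence}), and they are inequivalent because their parities, and hence their numbers of sinks, differ. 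This gives exactly two types: at most two by the fixed-facet argument, at least two by the parity invariant.
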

\begin{proof}
  We have argued above that an eye cannot be extended to a PUSO, so
  let us try to extend a bow (the front facet in
  Figure~\ref{fig:3pusos}). The figure shows the two candidates for
  PUSOs provided by Lemma~\ref{lem:outmap01}. Both happen to be PUSOs,
  so starting from the single combinatorial type of 2-dimensional
  border USOs, we arrive at the two combinatorial types of
  3-dimensional PUSOs.
\end{proof}

Concluding this section, we prove the advertised characterization of border USOs.

\begin{proof} {[Theorem~\ref{thm:pusof_charact}]} Let $\Cube$ be a cube with facet $\Face$,
  $\carr{\Cube}\setminus\carr{\Face}=\{n\}$. We show that condition
  (\ref{pusof_thm_eq}) fails for some pair of distinct vertices
  $U,V\in\vert{\Face}$ if and only if $\Cube_{\outmap_0}$ is not a
  PUSO, with $\outmap_0$ as in (\ref{eq:outmap01}). By
  Lemma~\ref{lem:outmap01}, this is equivalent to $\Face_{\outmapp}$
  not being a border USO.

  Suppose first that there are distinct $U,V\in\vert{\Face}$ such that 
  $\outmapp(U)\oplus \outmapp(V) \subseteq U\oplus V$ and
  $|\outmapp(U)\oplus \outmapp(V)| =0 \mod 2$, meaning that $U$ and
  $V$ have the same outmap parity. By definition of $\outmap_0$, we
  then get
  \[
  \outmapp(U)\oplus \outmapp(V)=\outmap_0(U)\oplus \outmap_0(V) =
  \outmap_0(U)\oplus \outmap_0(V') \subseteq U\oplus V,
\]
where $V'=\carr{\Cube}\setminus V$ is antipodal to $V$ in
$\Cube$. Moreover, as $U\oplus V$ is also antipodal to $U\oplus V'$,
the inclusion
$\outmap_0(U)\oplus \outmap_0(V)=\outmap_0(U)\oplus
\outmap_0(V')\subseteq U\oplus V$ is equivalent to
\begin{equation}\label{eq:UV'}
(\outmap_0(U)\oplus \outmap_0(V'))\cap(U\oplus V')=\emptyset.
\end{equation}
Since $U,V$ are distinct and non-antipodal (in $\Cube$), $U,V'$ are
therefore distinct non-antipodal vertices that fail to satisfy
Theorem~\ref{thm:char}~(i), so $\Cube_{\outmap_0}$ is not a PUSO.

For the other direction, we play the movie backwards. Suppose that
$\Cube_{\outmap_0}$ is not a PUSO. As pairs of antipodal vertices
comply with Theorem~\ref{thm:char}~(ii) by definition of $\outmap_0$,
there must be distinct and non-antipodal vertices $U,V'$ with the
offending property (\ref{eq:UV'}). Moreover, as $\outmap_0$ induces
USOs on both $\Face$ (where we have $\Face_{\outmapp}$) and its
opposite facet $\Face'$ (where we have a mirror image of
$\Face_{\outmapp}$), Lemma~\ref{lem:strongchar} implies that $U$ and
$V'$ cannot both be in $\Face$, or in $\Face'$. W.l.o.g.\ assume that
$U\in\vert{\Face}, V'\in\vert{\Face'}$, and let $V\in\vert{\Face}$ be
antipodal to $V'$.  Then, as before, (\ref{eq:UV'}) is equivalent to
the inclusion
$\outmap_0(U)\oplus \outmap_0(V)=\outmap_0(U)\oplus
\outmap_0(V')\subseteq U\oplus V$. In particular, $\outmap_0(U)$ and $\outmap_0(V)$ must agree in
coordinate $n$ which in turn implies
\[\outmapp(U)\oplus \outmapp(V)=\outmap_0(U)\oplus \outmap_0(V)
\subseteq U\oplus V,\]
and since $|\outmap_0(U)\oplus \outmap_0(V)|=0\mod 2$ by definition of
$\outmap_0$, we have found two distinct vertices $U,V\in\vert{\Face}$
that fail to satisfy (\ref{pusof_thm_eq}).
\end{proof}

For an example of a 3-dimensional border USO, see
Figure~\ref{fig:3pusof}. In particular, we see that faces of border
USOs are not necessarily border USOs: an eye cannot be a 2-dimensional
border USO (Figure~\ref{fig:Noeye}), but it may appear in a facet
$\Face$ of a 3-dimensional border USO (for example, the bottom facet
in Figure~\ref{fig:3pusof}), since the incident edges along the third
coordinate can be chosen such that (\ref{pusof_thm_eq}) does not
impose any condition on the USO in $\Face$.

\begin{figure}[htb]
\begin{center}
\includegraphics[width=0.3\textwidth]{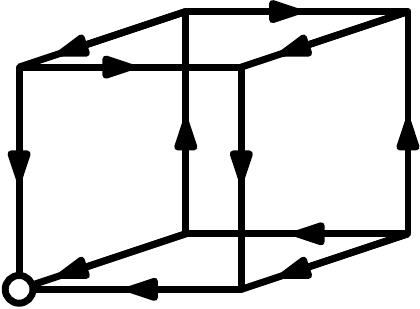}
\end{center}
\caption{A border USO with eyes (non-border USOs) in the bottom and the top facet}
\label{fig:3pusof}
\end{figure}

Let $\border(n)$ denote the number of border USOs of the standard
$n$-cube. By Lemma~\ref{lem:outmap01}, 
\begin{equation}\label{eq:pusoborder}
\puso(n)=2\border(n-1), \quad n\geq 2.
\end{equation}

\section{Odd Unique Sink Orientations}\label{sec:odd}
By (\ref{eq:pusoborder}), counting PUSOs boils down to counting border
USOs. However, as faces of border USOs are not necessarily border USOs
(see the example of Figure~\ref{fig:3pusof}), it will be easier to
work in a dual setting where we get a class of USOs that is closed
under taking faces.

\begin{lemma}\label{lem:duality} Let $\Cube_{\outmap}$ be a USO of
  $\Cube=\Cube^{B}$ with outmap $\outmap$. Then
  $\Cube_{\outmap^{-1}}$ is a USO as well, the \emph{dual} of
  $\Cube_{\outmap}$.
\end{lemma}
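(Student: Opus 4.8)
The plan is to push everything through the Szab\'o--Welzl characterization (\ref{eq:char}) as packaged in Lemma~\ref{lem:strongchar}, using only the symmetry of that condition in its two arguments.

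First I would check that $\outmap^{-1}$ is even meaningful. Since $\Cube = \Cube^{B}$ has empty lower set, $\vert{\Cube} = 2^{B} = 2^{\carr{\Cube}}$, so $\outmap$ maps $2^{B}$ to itself; moreover $\outmap_{\Cube}(V) = \outmap(V)\cap\carr{\Cube} = \outmap(V)$, and applying Lemma~\ref{lem:strongchar} with $\Face = \Cube$ tells us that this common map $\outmap = \outmap_{\Cube}$ is a bijection of $2^{B}$. Hence $\outmap^{-1}\colon 2^{B}\to 2^{B}$ is a legitimate candidate outmap on $\Cube$, and by Lemma~\ref{lem:strongchar} (again with $\Face=\Cube$) it suffices to verify condition (\ref{eq:char}) for $\outmap^{-1}$, i.e.\ that
\[
(\outmap^{-1}(U)\oplus\outmap^{-1}(V))\cap(U\oplus V)\neq\emptyset
\quad\text{for all distinct }U,V\in 2^{B}.
\]

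Then I would substitute $U' := \outmap^{-1}(U)$ and $V' := \outmap^{-1}(V)$, which are distinct because $\outmap^{-1}$ is injective and which satisfy $\outmap(U')=U$, $\outmap(V')=V$. The displayed intersection then equals $(U'\oplus V')\cap(\outmap(U')\oplus\outmap(V'))$, which is exactly the left-hand side of (\ref{eq:char}) for the outmap $\outmap$ on the distinct pair $U',V'$ --- nonempty since $\Cube_{\outmap}$ is a USO. As $U,V$ range over all distinct pairs, so do $U',V'$, so (\ref{eq:char}) holds for $\outmap^{-1}$ and Lemma~\ref{lem:strongchar} certifies that $\Cube_{\outmap^{-1}}$ is a USO. (Consistency of $\outmap^{-1}$ as an orientation is automatic here: specializing the verified (\ref{eq:char}) to $V = U\oplus\{i\}$ gives $i\in\outmap^{-1}(U)\oplus\outmap^{-1}(U\oplus\{i\})$.)

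I do not anticipate a genuine obstacle: the only point needing care is that the inverse of an outmap makes sense only when its domain and codomain coincide, which is precisely why the statement is phrased for $\Cube = \Cube^{B}$ --- for a proper subcube $\Cube^{[A,B]}$ with $A\neq\emptyset$ the outmap lands in $2^{B\setminus A}\neq\vert{\Cube}$, and the lemma must be applied after re-basing the cube at $\emptyset$. Everything else is just the observation that (\ref{eq:char}) is symmetric under exchanging the roles of the ``pair of vertices'' and the ``pair of outmap values''.
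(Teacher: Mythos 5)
Your proposal is correct and follows essentially the same route as the paper's proof: both reduce the claim to the Szab\'o--Welzl condition (\ref{eq:char}) via Lemma~\ref{lem:strongchar} and exploit its symmetry under swapping the roles of vertices and outmap values through the substitution $U'=\outmap^{-1}(U)$, $V'=\outmap^{-1}(V)$. Your additional remarks on why $\Cube=\Cube^B$ is needed for $\outmap^{-1}$ to make sense and on the automatic consistency of $\outmap^{-1}$ are correct refinements that the paper leaves implicit.
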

\begin{proof} We use the USO characterization of Lemma~\ref{lem:strongchar}. Since
  $\Cube_{\outmap}$ is a USO, $\outmap:2^B\rightarrow 2^B$ is
  bijective to begin with, so $\outmap^{-1}$ exists. Now let
  $U',V'\in\vert{\Cube}$, $U'\neq V'$ and define
  $U:=\outmap^{-1}(U')\neq\outmap^{-1}(V')=:V$. Then we have
\[
(\outmap^{-1}(U')\oplus \outmap^{-1}(V'))\cap (U'\cap V') =
(U\oplus V) \cap (\outmap(U)\oplus\outmap(V) \neq \emptyset,
\]
since $\Cube_{\outmap}$ is a USO. Hence, $\Cube_{\outmap^{-1}}$ is a USO 
as well.
\end{proof}

\begin{definition}[Odd USO]\label{def:odd}
    An \emph{odd USO} is a USO that is dual to a border USO.
\end{definition}

Figure~\ref{fig:duality} shows an
example of the duality with the following outmaps:

\[
\begin{array}{c|c|c|c|c|c|c|c|c|c}
V & \emptyset & \{1\} & \{2\} & \{3\} & \{1,2\} & \{1,3\} & \{2,3\} &
                                                                      \{1,2,3\}
  & \outmap^{-1}(V')    \\ \hline
\outmap(V) &  \emptyset & \{1\} & \{1,2\} & \{2,3\} & \{2\} &
                                                              \{1,2,3\}
                                                          & \{1,3\} &
                                                                      \{3\}
  & V'                                     
\end{array}
\]

\begin{figure}[htb]
\begin{center}
\includegraphics[width=0.9\textwidth]{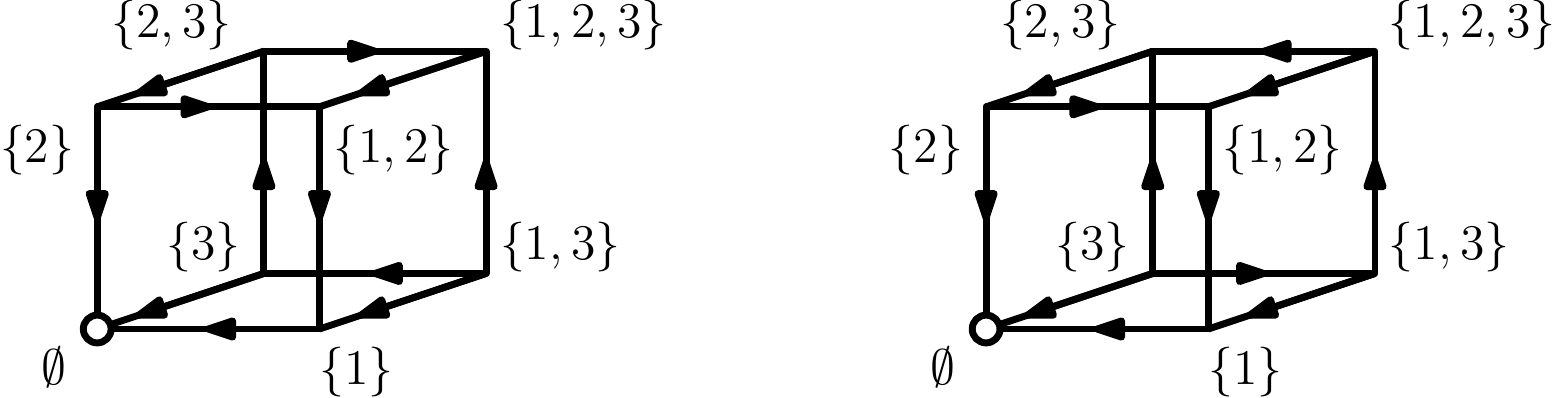}
\end{center}
\caption{The border USO $\Cube_{\outmap}$ of Figure~\ref{fig:3pusof} (left) and its dual odd USO $\Cube_{\outmap^{-1}}$
  (right). The labels denote the vertices.}
\label{fig:duality}
\end{figure}

A characterization of odd USOs now follows from
Theorem~\ref{thm:pusof_charact} by swapping the roles of vertices and
outmaps; the proof follows the same scheme as the one of
Lemma~\ref{lem:duality} and is omitted.

\begin{theorem}[Odd USO characterization]
  \label{thm:odd_charact}
    Let $\Cube_{\outmap}$ be a USO with outmap $\outmap$. $\Cube_{\outmap}$ is an odd USO 
    if and only if the following condition holds for all pairs of
    distinct vertices $U,V\in\vert{\Cube}$:
    \begin{equation}
        \label{eq:odd_thm_eq}
        U\oplus V \subseteq \outmap(U)\oplus \outmap(V) \quad
        \Rightarrow \quad |U\oplus V| = 1 \mod 2.
    \end{equation}
\end{theorem}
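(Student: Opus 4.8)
The plan is to obtain this as the dual statement of the border USO characterization (Theorem~\ref{thm:pusof_charact}), in exactly the way Lemma~\ref{lem:duality} was obtained from Lemma~\ref{lem:strongchar}. By Definition~\ref{def:odd}, $\Cube_{\outmap}$ is an odd USO if and only if $\Cube_{\outmap^{-1}}$ is a border USO. First I would record the two facts that make this meaningful: since $\Cube_{\outmap}$ is a USO, Lemma~\ref{lem:strongchar} tells us that $\outmap$ is bijective, so $\outmap^{-1}$ is well-defined, and Lemma~\ref{lem:duality} tells us that $\Cube_{\outmap^{-1}}$ is again a USO. Hence Theorem~\ref{thm:pusof_charact} may be applied to it with outmap $\outmapp := \outmap^{-1}$.

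Next I would instantiate condition (\ref{pusof_thm_eq}) with $\outmapp = \outmap^{-1}$: for all distinct $U',V'\in\vert{\Cube}$,
\[
\outmap^{-1}(U')\oplus\outmap^{-1}(V') \subseteq U'\oplus V'
\quad\Rightarrow\quad
|\outmap^{-1}(U')\oplus\outmap^{-1}(V')| = 1 \bmod 2 .
\]
Writing $U := \outmap^{-1}(U')$ and $V := \outmap^{-1}(V')$, equivalently $U' = \outmap(U)$ and $V' = \outmap(V)$, and using that $\outmap$ is a bijection---so that quantifying over all distinct pairs $U',V'$ is the same as quantifying over all distinct pairs $U,V$---the displayed implication turns verbatim into (\ref{eq:odd_thm_eq}). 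Running the substitution in the reverse direction shows conversely that (\ref{eq:odd_thm_eq}) for $\outmap$ forces (\ref{pusof_thm_eq}) for $\outmap^{-1}$, so the two conditions are equivalent and the theorem follows.

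I do not expect a genuine obstacle here; the only points needing a line of care are the bookkeeping ones flagged above---confirming that $\outmap^{-1}$ really is the outmap of a USO, so that Theorem~\ref{thm:pusof_charact} applies to it, and observing that the bijection $\outmap$ matches up the ``all distinct pairs of vertices'' quantifiers on the two sides. This is precisely the role of the bijectivity step in the proof of Lemma~\ref{lem:duality}, which is why the paper can state that the proof follows the same scheme.
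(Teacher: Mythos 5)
Your proof is correct and is exactly the argument the paper intends (and omits): by Definition~\ref{def:odd} and the involutive nature of duality, $\Cube_{\outmap}$ is odd iff $\Cube_{\outmap^{-1}}$ is a border USO, and applying Theorem~\ref{thm:pusof_charact} to $\outmapp=\outmap^{-1}$ with the substitution $U'=\outmap(U)$, $V'=\outmap(V)$ turns condition (\ref{pusof_thm_eq}) verbatim into (\ref{eq:odd_thm_eq}). The bookkeeping points you flag---that $\outmap$ is bijective by Lemma~\ref{lem:strongchar} and that the dual is again a USO by Lemma~\ref{lem:duality}, so the quantifiers over distinct pairs match up---are precisely what the paper means by saying the proof follows the same scheme as that of Lemma~\ref{lem:duality}.
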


In words, if the outmap values of two distinct vertices $U,V$ differ
in all coordinates within the face spanned by $U$ and $V$, then $U$
and $V$ are of odd Hamming distance.\footnote{Hamming distance is
  defined for two bit vectors, but we can also define it for two sets
  in the obvious way as the size of their symmetric difference.} As
this property also holds for any two distinct vertices within a face
$\Face$, this implies the following.

\begin{corollary} \label{cor:odd} Let $C_{\outmap}$ be an odd USO, $\Face$ a face of $\Cube$.
\begin{itemize}
\item[(i)] $\Face_{\outmap}$ is an odd USO.
\item[(ii)] If $\dim(\Face)=2$, $\Face_{\outmap}$ is a bow.
  \end{itemize}
\end{corollary}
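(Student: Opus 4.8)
The plan is to derive both parts directly from the odd USO characterization in Theorem~\ref{thm:odd_charact}, exploiting the fact that condition (\ref{eq:odd_thm_eq}) is \emph{local}: its hypothesis and conclusion depend only on $U\oplus V$ and on $\outmap(U)\oplus\outmap(V)$, and the latter, by (\ref{eq:faceout}), is unchanged when $\outmap$ is replaced by $\outmap_{\Face}$ whenever $U,V\in\vert{\Face}$ (since $U\oplus V\subseteq\carr{\Face}$ absorbs the intersection with $\carr{\Face}$). So the natural first move is to establish (i), and then get (ii) as an immediate special case.

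For part (i), I would fix a face $\Face$ of $\Cube$ and check that $\Face_{\outmap}=\Face_{\outmap_{\Face}}$ satisfies the characterization. First, $\Face_{\outmap}$ is a USO by definition of USO (every face of a USO is a USO). Next, take any two distinct $U,V\in\vert{\Face}$ with $U\oplus V\subseteq \outmap_{\Face}(U)\oplus\outmap_{\Face}(V)$. Since $U\oplus V\subseteq\carr{\Face}$, equation~(\ref{eq:symdiff}) gives $\outmap_{\Face}(U)\oplus\outmap_{\Face}(V)=(\outmap(U)\oplus\outmap(V))\cap\carr{\Face}\subseteq\outmap(U)\oplus\outmap(V)$, so the hypothesis $U\oplus V\subseteq\outmap(U)\oplus\outmap(V)$ of (\ref{eq:odd_thm_eq}) holds for $\Cube_{\outmap}$; applying Theorem~\ref{thm:odd_charact} to the odd USO $\Cube_{\outmap}$ yields $|U\oplus V|=1\bmod 2$, which is exactly the conclusion needed for $\Face_{\outmap}$. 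Hence $\Face_{\outmap}$ is an odd USO.

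For part (ii), suppose $\dim(\Face)=2$. By (i), $\Face_{\outmap}$ is an odd USO, hence a USO of the $2$-cube, so it is either an eye or a bow (the only two combinatorial types of $2$-dimensional USOs, cf.\ Figure~\ref{fig:2cubes}). To rule out the eye, I would invoke the characterization once more on the pair of antipodal vertices $U,V=\carr{\Face}\setminus U$: then $U\oplus V=\carr{\Face}$ has even size $2$, so (\ref{eq:odd_thm_eq}) forces $\carr{\Face}\not\subseteq\outmap_{\Face}(U)\oplus\outmap_{\Face}(V)$, i.e.\ $\outmap_{\Face}(U)$ and $\outmap_{\Face}(V)$ agree in some coordinate of $\carr{\Face}$. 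In an eye, however, the two antipodal ``source-type'' and ``sink-type'' vertices have complementary outmaps on $\carr{\Face}$, while in a bow they agree in exactly one coordinate; so the eye is impossible and $\Face_{\outmap}$ must be a bow. (Alternatively, one checks the two antipodal pairs of the eye explicitly.)

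I expect no serious obstacle here; the only point requiring a little care is the bookkeeping in part (i) that the face-restricted outmap $\outmap_{\Face}$ inherits the hypothesis of~(\ref{eq:odd_thm_eq}) from $\outmap$ — this is precisely the localization argument already used for USOs after Lemma~\ref{lem:strongchar}, and it goes through verbatim because intersecting with $\carr{\Face}$ can only shrink $\outmap(U)\oplus\outmap(V)$ while leaving $U\oplus V$ (which lies inside $\carr{\Face}$) untouched. Part (ii) is then a one-line deduction from the classification of $2$-cube USOs together with a single application of the characterization to an antipodal pair.
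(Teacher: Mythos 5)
Your proposal is correct and follows essentially the same route as the paper, which justifies (i) by the locality of condition (\ref{eq:odd_thm_eq}) within the face spanned by $U$ and $V$ and (ii) by observing that the antipodal source--sink pair of an eye violates (\ref{eq:odd_thm_eq}); you have merely spelled out the bookkeeping (via (\ref{eq:symdiff}) and (\ref{eq:faceout})) that the paper leaves implicit. No gaps.
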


Indeed, as source and sink of an eye violate (\ref{eq:odd_thm_eq}), all
2-faces of odd USOs are bows. To make the global structure of odd USOs
more transparent, we develop an alternative view on them in terms of
\emph{caps} that can be considered as ``higher-dimensional bows''.

\begin{definition}[Cap]
  Let $\Cube_{\outmap}$ be an orientation with bijective outmap
  $\outmap$. For $W\in\vert{\Cube}$, let $\overline{W}\in\vert{\Cube}$
  be the unique \emph{complementary} vertex, the one whose outmap
  value is antipodal to $\outmap(W)$; formally,
  $\outmap(W)\oplus \outmap(\overline{W})=\carr{\Cube}$.
  $\Cube_{\outmap}$ is called a \emph{cap} if
\[
|W\oplus\overline{W}|=1\mod 2, \quad \forall W\in\vert{\Cube}.
\]
\end{definition}

Figure~\ref{fig:oddface} illustrates this notion on three examples.

\begin{figure}[htb]
\begin{center}
\includegraphics[width=0.9\textwidth]{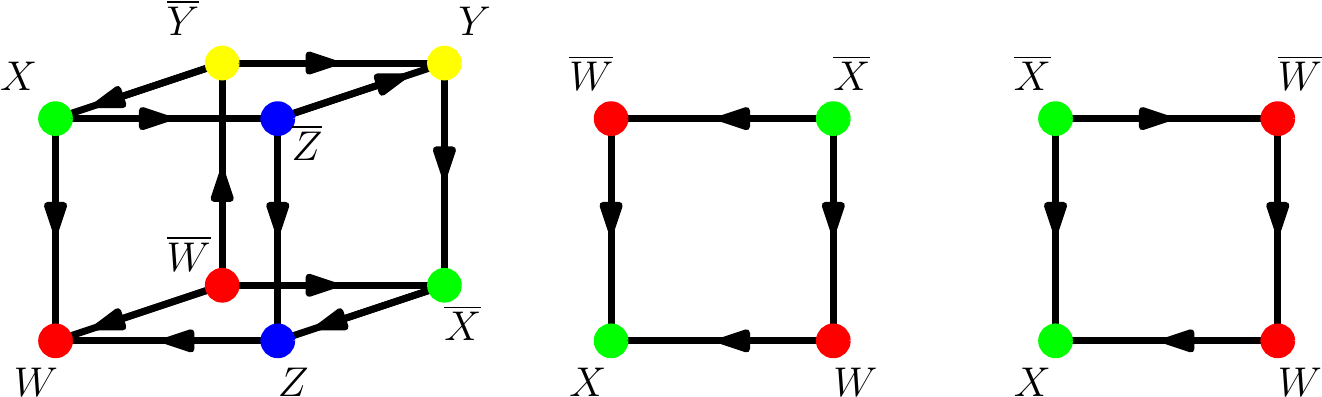}
\end{center}
\caption{A 3-dimensional cap: complementary vertices (vertices that
  differ in all outgoing coordinates) have odd Hamming distance
  (left); the eye is not a cap (middle); the bow is a cap (right).}
\label{fig:oddface}
\end{figure}

\begin{lemma}\label{lem:oddchar}
  Let $\Cube_{\outmap}$ be an orientation with outmap $\outmap$. $\Cube_{\outmap}$ is an odd
  USO if and only if all its faces are caps.
\end{lemma}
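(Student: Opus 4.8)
The plan is to prove both directions by connecting the cap condition to the odd‑USO characterization of Theorem~\ref{thm:odd_charact}. The crucial observation is that the "complementary vertex" $\overline{W}$ in a face $\Face$ is exactly the vertex $V\in\vert{\Face}$ whose outmap value (restricted to $\carr{\Face}$) is antipodal to that of $W$, i.e. $\outmap_{\Face}(W)\oplus\outmap_{\Face}(\overline W)=\carr{\Face}$, which by \eqref{eq:faceout} unwinds to $(\outmap(W)\oplus\outmap(\overline W))\cap\carr{\Face}=\carr{\Face}$, equivalently $\carr{\Face}\subseteq\outmap(W)\oplus\outmap(\overline W)$. Since $W,\overline W\in\vert{\Face}$ means $W\oplus\overline W\subseteq\carr{\Face}$, this is the same as saying $W\oplus\overline W\subseteq\outmap(W)\oplus\outmap(\overline W)$. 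So "$\Face_{\outmap}$ has $\overline W$ as the complementary vertex of $W$'' is precisely the hypothesis side of the implication \eqref{eq:odd_thm_eq} applied to the pair $W,\overline W$ inside $\Face$, and the cap condition $|W\oplus\overline W|=1\bmod 2$ is precisely its conclusion.

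For the forward direction, assume $\Cube_{\outmap}$ is an odd USO. By Corollary~\ref{cor:odd}(i) every face $\Face_{\outmap}$ is again an odd USO, so in particular $\outmap_{\Face}$ is bijective (Lemma~\ref{lem:strongchar}), and the notion of complementary vertex is well defined in $\Face$. Given any $W\in\vert{\Face}$, let $\overline W$ be its complement in $\Face$; by the observation above, the pair $W,\overline W$ satisfies the hypothesis of \eqref{eq:odd_thm_eq} (as vertices of the odd USO $\Face_{\outmap}$, using $\outmap_{\Face}$), hence $|W\oplus\overline W|=1\bmod 2$. Thus $\Face_{\outmap}$ is a cap, and since $\Face$ was arbitrary, all faces of $\Cube_{\outmap}$ are caps.

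For the converse, assume all faces of $\Cube_{\outmap}$ are caps; applying this to $\Cube$ itself, $\outmap$ is bijective. I first note $\Cube_{\outmap}$ is a USO: if two distinct $U,V$ violated \eqref{eq:char}, then in the face $\Face$ spanned by $U$ and $V$ we would have $(\outmap_{\Face}(U)\oplus\outmap_{\Face}(V))\cap\carr{\Face}$ missing $U\oplus V$, and since $U\oplus V=\carr{\Face}$ here (they span $\Face$), this forces $\outmap_{\Face}(U)=\outmap_{\Face}(V)$, contradicting bijectivity of $\outmap_{\Face}$ (which holds because $\Face_{\outmap}$ is a cap). Now verify \eqref{eq:odd_thm_eq}: take distinct $U,V$ with $U\oplus V\subseteq\outmap(U)\oplus\outmap(V)$, and let $\Face$ be the face spanned by them, so $U\oplus V=\carr{\Face}$. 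By the boxed observation, $V$ is the complementary vertex of $U$ in $\Face$; since $\Face_{\outmap}$ is a cap, $|U\oplus V|=1\bmod 2$, which is the desired conclusion. Hence $\Cube_{\outmap}$ satisfies Theorem~\ref{thm:odd_charact} and is an odd USO.

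The main thing to get right is the bookkeeping identifying "complementary vertex in $\Face$'' with the hypothesis clause of \eqref{eq:odd_thm_eq}, i.e. carefully passing between $\outmap$ and $\outmap_{\Face}$ via \eqref{eq:faceout} and using that when $U,V$ span $\Face$ one has $U\oplus V=\carr{\Face}$, so that the two inclusions "$W\oplus\overline W\subseteq\outmap(W)\oplus\outmap(\overline W)$'' and "$\carr{\Face}\subseteq\outmap(W)\oplus\outmap(\overline W)$'' coincide. Everything else is a direct translation through Theorem~\ref{thm:odd_charact} and Corollary~\ref{cor:odd}.
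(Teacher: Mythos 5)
Your proof is correct and follows essentially the same route as the paper: both directions reduce the cap condition to the hypothesis/conclusion of the odd-USO characterization (Theorem~\ref{thm:odd_charact}) by passing between $\outmap$ and $\outmap_{\Face}$ and using that complementarity in the spanned face is exactly the inclusion $U\oplus V\subseteq\outmap(U)\oplus\outmap(V)$. The paper's version is merely terser (it gets the USO property in the converse directly from bijectivity of the face outmaps rather than via condition~(\ref{eq:char})), and your closing remark correctly flags the one place where the equivalence of the two inclusions needs $U\oplus V=\carr{\Face}$.
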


\begin{proof}
  If all faces are caps, their outmaps are bijective, meaning that
  all faces have unique sinks. So $\Cube_{\outmap}$ is a USO. It is odd,
  since the characterizing property (\ref{eq:odd_thm_eq}) follows for all
  distinct $U,V$ via the cap spanned by $U$ and $V$. 

  Now suppose that $\Cube_{\outmap}$ is an odd USO. Then every face
  $\Face$ has a bijective outmap to begin with, by
  Lemma~\ref{lem:strongchar}; to show that $\Face$ is a cap, consider
  any two complementary vertices $W,\overline{W}$ in $\Face$. As $W$
  and $\overline{W}$ are in particular complementary in the face that
  they span, they have odd Hamming distance by
  (\ref{eq:odd_thm_eq}).
\end{proof}

There is a ``canonical'' odd USO of the standard $n$-cube in which the
Hamming distances of complementary vertices are not only odd, but in
fact always equal to $1$. This orientation is known as the
\emph{Klee-Minty cube}, as it captures the combinatorial structure of
the linear program that Klee and Minty used in 1972 to show for the first time
that the simplex algorithm may take exponential time~\cite{KM}.

The $n$-dimensional Klee-Minty cube can be defined inductively:
$\KM^{[n]}$ is obtained from $\KM^{[n-1]}$ by embedding an
$[n-1]$-flipped copy of $\KM^{[n-1]}$ into the opposite facet
$\Cube^{[\{n\},[n]]}$, with all connecting edges oriented towards
$\Cube^{[n-1]}$; the resulting USO contains a directed Hamiltonian
path; see Figure~\ref{fig:KM}. As a direct consequence of the
construction, $\KM^{[n]}$ is a cap: complementary vertices are neighbors
along coordinate $n$. Moreover, it is easy to see that each $k$-face
is combinatorially equivalent to $\KM^{[k]}$, hence all faces are caps,
so $\KM^{[n]}$ is an odd USO.

\begin{figure}[htb]
\begin{center}
\includegraphics[width=\textwidth]{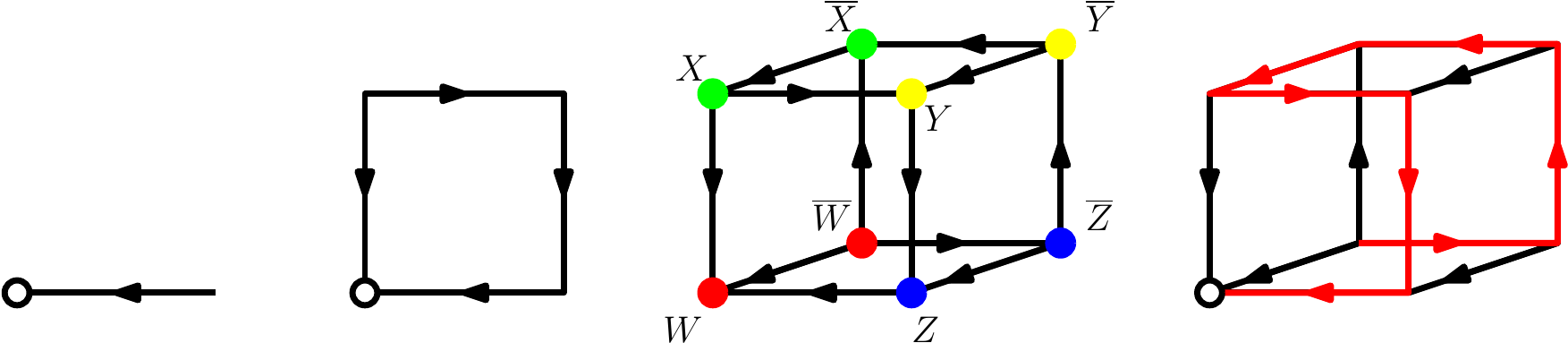}
\end{center}
\caption{The Klee-Minty cubes $\KM^{[1]},
  \KM^{[2]},\KM^{[3]}$ (with pairs of complementary vertices and directed Hamiltonian path)}
\label{fig:KM}
\end{figure}

Next, we do this more formally, as we will need the Klee-Minty
cube as a starting point for generating many odd USOs. 

\begin{lemma} \label{lem:km} Consider the standard $n$-cube $\Cube$ and the outmap
  $\outmap:2^{[n]}\rightarrow 2^{[n]}$ with
\[
\outmap (V) = \{j\in[n]: |V\cap\{j,j+1,\ldots,n\}|=1\mod 2\}, \quad
\forall V\subseteq [n].
\]
Then $\KM^{[n]}:=\Cube_{\outmap}$ is an odd USO that satisfies
\begin{equation}\label{eq:vertexflip}
\outmap(W) \oplus \outmap(W\oplus\{i\}) = [i], \quad \forall i\in[n].
\end{equation}
for each vertex $W$.
\end{lemma}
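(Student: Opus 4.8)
The plan is to verify the explicit formula~(\ref{eq:vertexflip}) by a direct computation with the given outmap, and then deduce that $\Cube_{\outmap}$ is an odd USO as an easy corollary of that formula together with Lemma~\ref{lem:oddchar}. First I would check consistency and~(\ref{eq:vertexflip}) in one stroke: fix a vertex $W$ and a coordinate $i\in[n]$, and compare $\outmap(W)$ with $\outmap(W\oplus\{i\})$ coordinate by coordinate. For a coordinate $j$, membership of $j$ in $\outmap(V)$ depends only on the parity of $|V\cap\{j,j+1,\ldots,n\}|$. If $j>i$, then the set $\{j,\ldots,n\}$ does not contain $i$, so toggling coordinate $i$ does not change this parity, and $j\notin\outmap(W)\oplus\outmap(W\oplus\{i\})$. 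If $j\le i$, then $i\in\{j,\ldots,n\}$, so toggling coordinate $i$ flips the parity, and $j\in\outmap(W)\oplus\outmap(W\oplus\{i\})$. Hence $\outmap(W)\oplus\outmap(W\oplus\{i\})=\{1,2,\ldots,i\}=[i]$ exactly as claimed; in particular $i\in\outmap(W)\oplus\outmap(W\oplus\{i\})$, so $\outmap$ is consistent and $\Cube_{\outmap}$ is a genuine cube orientation.

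Next I would argue that $\Cube_{\outmap}$ is an odd USO. By Lemma~\ref{lem:oddchar} it suffices to show every face of $\Cube_{\outmap}$ is a cap, i.e.\ complementary vertices have odd Hamming distance. I claim more: in $\Cube_{\outmap}$ itself, the complementary vertex of $W$ is simply $W\oplus\{n\}$, so complementary vertices are neighbors along coordinate $n$ and have Hamming distance $1$. Indeed, by~(\ref{eq:vertexflip}) with $i=n$ we have $\outmap(W)\oplus\outmap(W\oplus\{n\})=[n]=\carr{\Cube}$, which is exactly the defining condition that $W\oplus\{n\}$ be the complementary vertex of $W$ (uniqueness holds because $\outmap$ is injective, being consistent with all the distinct flip-differences $[i]$ forcing injectivity—or one simply invokes that a USO outmap is bijective, which we are about to establish). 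To handle an arbitrary face $\Face=\Cube^{[I,J]}$, I would observe that $\outmap_{\Face}(V)=\outmap(V)\cap\carr{\Face}$ and that the analogous flip identity $\outmap_{\Face}(W)\oplus\outmap_{\Face}(W\oplus\{i\})=[i]\cap\carr{\Face}$ holds for $i\in\carr{\Face}$; letting $m=\max\carr{\Face}$, the same argument shows the complementary vertex of $W$ within $\Face$ is $W\oplus\{m\}$, so $\Face$ is a cap. By Lemma~\ref{lem:oddchar}, $\Cube_{\outmap}$ is an odd USO, and in particular a USO, so its outmap is bijective, which retroactively justifies the uniqueness of complementary vertices used above.

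The only real subtlety—hardly an obstacle—is being careful about the direction of the equivalence in Lemma~\ref{lem:oddchar}: it requires showing \emph{all} faces are caps, not just the top cube, which is why I phrase the flip-difference computation at the level of an arbitrary face $\Face$ and identify $\max\carr{\Face}$ as the ``last'' coordinate playing the role that $n$ plays in the full cube. Everything else is bookkeeping with parities of set intersections. I would present the face-level flip identity first, deduce the cap property for every face, invoke Lemma~\ref{lem:oddchar}, and close by noting that~(\ref{eq:vertexflip}) is the special case $\Face=\Cube$ of the face-level identity.
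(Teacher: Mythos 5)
Your argument is correct in substance but takes a genuinely different route from the paper's. The paper first proves the $GF(2)$-linearity $\outmap(U)\oplus\outmap(V)=\outmap(U\oplus V)$, then obtains the USO property from Lemma~\ref{lem:strongchar} by observing that $\outmap(U\oplus V)$ always contains the \emph{largest} element of $U\oplus V$, reads off (\ref{eq:vertexflip}) as the special case $\outmap(\{i\})=[i]$, and obtains oddness from Theorem~\ref{thm:odd_charact} by observing that $\outmap(U\oplus V)$ never contains the \emph{second-largest} element of $U\oplus V$. You instead verify (\ref{eq:vertexflip}) by a direct parity computation and then deduce everything from the cap characterization (Lemma~\ref{lem:oddchar}) by exhibiting the complementary vertex of $W$ in a face $\Face$ explicitly as $W\oplus\{\max\carr{\Face}\}$. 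Your route formalizes exactly the informal picture the paper sketches just before the lemma (``complementary vertices are neighbors along coordinate $n$, and every face looks like a smaller Klee--Minty cube''), and is arguably more geometric; the paper's route is shorter because its two characterizations reduce everything to a two-line computation about largest and second-largest elements of $U\oplus V$.

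One step needs shoring up: the bijectivity of $\outmap_{\Face}$ on each face, which the definition of a cap presupposes and which Lemma~\ref{lem:oddchar} uses to conclude that every face has a unique sink. Your primary justification (``the distinct flip-differences $[i]$ force injectivity'') has the right idea but needs one more word: distinctness of the sets $[i]$ is not enough in general. What does the job is that (\ref{eq:vertexflip}) makes $\outmap$ affine over $GF(2)$, with $\outmap(U)\oplus\outmap(V)=\bigoplus_{i\in U\oplus V}[i]$, and the indicator vectors of $[1],\dots,[n]$ form a triangular, hence linearly independent, system; in particular this XOR contains $\max(U\oplus V)$ and is nonempty, and the same triangularity persists inside any face because $[i]\cap\carr{\Face}$ is an initial segment of $\carr{\Face}$. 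Your fallback justification, by contrast, is circular: you cannot invoke ``a USO outmap is bijective'' to license the cap definition, since in your proof the USO property is itself derived from all faces being caps via Lemma~\ref{lem:oddchar}. Drop the ``retroactive'' remark and spell out the injectivity as above; then the proof is complete.
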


In particular, for all $i$ and $W\in\Cube^{[i-1]}$, $W$ and
$W\cup\{i\}$ are complementary in $\Cube^{[i]}$, so we recover the
above inductive view of the Klee-Minty cube.

\begin{proof}
We first show that 
\begin{equation}\label{eq:kmout}
\outmap(U)\oplus\outmap(V) = \outmap(U\oplus V), \quad \forall U,V\subseteq[n].
\end{equation}
Indeed, $j\in \outmap(U)\oplus\outmap(V)$ is equivalent to
$U\cap\{j,j+1,\ldots,n\}$ and $V\cap\{j,j+1,\ldots,n\}$ having
different parities, which by (\ref{eq:symdiff}) is equivalent to
$(U\oplus V)\cap\{j,j+1,\ldots,n\}$ having odd parity, meaning that $j\in\outmap(U\oplus V)$.

Since $\outmap(U)\oplus\outmap(V)=\outmap(U\oplus V)$ contains the
largest element of $U\oplus V$, (\ref{eq:char}) holds for all pairs of
distinct vertices, so $\Cube_{\outmap}$ is a USO. Condition
(\ref{eq:vertexflip}) follows from
\[
\outmap(W) \oplus \outmap(W\oplus\{i\}) =  \outmap(\{i\}) = [i].
\]
To show that $\Cube_{\outmap}$ is odd, we verify condition
(\ref{eq:odd_thm_eq}) of Theorem~\ref{thm:odd_charact}. Suppose that
$U\oplus V \subseteq \outmap(U)\oplus \outmap(V) = \outmap(U\oplus
V)$ for two distinct vertices. Since $\outmap(U\oplus V)$ does not
contain the second-largest element of $U\oplus V$, the former
inclusion can only hold if there is no such second-largest element,
i.e. $U$ and $V$ have (odd) Hamming distance $1$. 
\end{proof}

The Klee-Minty cube has a quite special property: \emph{complementing}
any vertex (reversing all its incident edges) yields another odd
USO.\footnote{In general, the operation of complementing a vertex will
  destroy the USO property.} Even more is true: any set of vertices
with disjoint neighborhoods can be complemented simultaneously. Thus,
if we select a set of $N$ vertices with pairwise Hamming distance at
least $3$, we get $2^N$ different odd USOs. We will use this in the
next section to get a lower bound on the number of odd USOs. The
following lemma is our main workhorse.

\begin{lemma}\label{lem:vertexflip}
  Let $\Cube_{\outmap}$ be an odd USO of the standard $n$-cube with
  outmap $\outmap$, $W\in\vert{\Cube}$ a vertex satisfying condition
  (\ref{eq:vertexflip}):
\[
\outmap(W) \oplus \outmap(W\oplus\{i\}) = [i], \quad \forall i\in[n].
\]
Let $\Cube_{\outmap'}$ be the orientation resulting from complementing
(reversing all edges incident to) $W$. Formally, 
\begin{equation}\label{eq:compl}
\begin{array}{lclcl}
\outmap'(W) &=& \outmap(W) &\oplus& [n], \\
\outmap'(W\oplus\{i\}) &=& \outmap (W\oplus\{i\}) &\oplus& \{i\}, \quad
  i=1,\ldots,n,
\end{array}
\end{equation}
and $\outmap'(V)=\outmap(V)$ for  all other vertices. Then $\Cube_{\outmap'}$ is an
odd USO as well.
\end{lemma}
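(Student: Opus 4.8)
The plan is to verify that $\Cube_{\outmap'}$ satisfies the defining conditions of an odd USO using two characterizations already available: the USO characterization of Lemma~\ref{lem:strongchar} (condition~(\ref{eq:char}) on all pairs of distinct vertices), and the odd-USO characterization of Theorem~\ref{thm:odd_charact} (the implication~(\ref{eq:odd_thm_eq})). First, however, one should check that $\outmap'$ is consistent, i.e.\ actually defines an orientation; this is immediate from~(\ref{eq:compl}), since complementing a vertex reverses exactly the edges incident to it and leaves all others untouched, which is a purely local operation on a legitimate orientation.

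The heart of the argument is to control, for an arbitrary pair of distinct vertices $U,V$, how the quantity $\outmap'(U)\oplus\outmap'(V)$ differs from $\outmap(U)\oplus\outmap(V)$. There are three cases. If neither $U$ nor $V$ lies in the closed neighborhood $\{W\}\cup\{W\oplus\{i\}:i\in[n]\}$, then $\outmap'(U)\oplus\outmap'(V)=\outmap(U)\oplus\outmap(V)$ and both~(\ref{eq:char}) and~(\ref{eq:odd_thm_eq}) are inherited directly from $\Cube_{\outmap}$ being an odd USO. If exactly one of them, say $U$, lies in the closed neighborhood, then $\outmap'(U)=\outmap(U)\oplus S$ for $S=[n]$ (if $U=W$) or $S=\{i\}$ (if $U=W\oplus\{i\}$), so $\outmap'(U)\oplus\outmap'(V)=(\outmap(U)\oplus\outmap(V))\oplus S$. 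The crucial observation is that $U\oplus V$ \emph{contains} $i$ in the second subcase (since $V\neq U$ and $V$ is not the neighbor $U\oplus\{i\}$, but we must be slightly careful: $V$ could equal $W=U\oplus\{i\}$, which is excluded because then $V$ would be in the neighborhood too) — here one uses that removing or adding coordinate $i$ to $\outmap(U)\oplus\outmap(V)$ changes~(\ref{eq:char}) only if $i\in U\oplus V$, and then it is precisely condition~(\ref{eq:vertexflip}) that pins down $\outmap(U)\oplus\outmap(W\oplus\{i\})$ enough to see that flipping by $\{i\}$ keeps some coordinate of $U\oplus V$ present. The remaining case is that both $U$ and $V$ lie in the closed neighborhood; then one of several sub-cases applies ($U=W$ and $V=W\oplus\{i\}$, giving $U\oplus V=\{i\}$ of odd size; or $U=W\oplus\{i\}$, $V=W\oplus\{j\}$ with $i\neq j$, giving $U\oplus V=\{i,j\}$), and~(\ref{eq:vertexflip}) together with~(\ref{eq:compl}) lets one compute $\outmap'(U)\oplus\outmap'(V)$ explicitly. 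For instance, when $U=W\oplus\{i\}$ and $V=W\oplus\{j\}$ with $i<j$, we get $\outmap(U)\oplus\outmap(V)=[i]\oplus[j]=\{i+1,\ldots,j\}$ and $\outmap'(U)\oplus\outmap'(V)=\{i+1,\ldots,j\}\oplus\{i\}\oplus\{j\}=\{i,i+1,\ldots,j-1\}=[i-1]\oplus[j-1]$; in particular it contains $j-1\geq i> \text{stuff}$, so certainly meets $U\oplus V=\{i,j\}$ iff $i\in\{i,i+1,\ldots,j-1\}$, which holds, so~(\ref{eq:char}) survives, and~(\ref{eq:odd_thm_eq}) is satisfied vacuously or because $|U\oplus V|=2$ forces the hypothesis of~(\ref{eq:odd_thm_eq}) to fail (one checks $\{i,j\}\not\subseteq\{i,\ldots,j-1\}$ since $j\notin$).

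I expect the main obstacle to be the bookkeeping in the mixed case where exactly one endpoint is in the neighborhood: one has to argue that flipping $\outmap(U)\oplus\outmap(V)$ by the single coordinate $\{i\}$ (the $U=W$ case, flipping by all of $[n]$, is easier because $[n]\supseteq U\oplus V$ cannot wipe out intersection with $U\oplus V$ unless it swaps it entirely — and it cannot, since $U\oplus V\ne\carr\Cube$ would need care, but actually $W$ is antipodal-free here) cannot simultaneously (a) empty the intersection with $U\oplus V$ and (b) create a new full inclusion $U\oplus V\subseteq\outmap'(U)\oplus\outmap'(V)$ with $|U\oplus V|$ even. Condition~(\ref{eq:vertexflip}) is exactly the extra rigidity that rules this out, so the proof will hinge on invoking it at the right moment; I would organize the write-up so that~(\ref{eq:vertexflip}) is applied to rewrite $\outmap(W\oplus\{i\})$ as $\outmap(W)\oplus[i]$ before comparing parities and intersections. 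Once the three cases are dispatched, Lemma~\ref{lem:strongchar} gives that $\Cube_{\outmap'}$ is a USO and Theorem~\ref{thm:odd_charact} upgrades this to ``odd''.
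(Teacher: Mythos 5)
Your overall strategy---checking (\ref{eq:char}) and (\ref{eq:odd_thm_eq}) pair by pair for $\outmap'$---is a legitimate alternative route, and your explicit computations in the case where both $U$ and $V$ lie in the closed neighborhood of $W$ are correct. But the mixed case, which you yourself flag as ``the main obstacle'', is exactly where the content of the lemma sits, and your proposal does not dispatch it. Two concrete holes. First, for $U=W$ and $V$ outside the neighborhood, $\outmap'(U)\oplus\outmap'(V)$ is the complement of $\outmap(U)\oplus\outmap(V)$, so (\ref{eq:char}) survives precisely when $U\oplus V\not\subseteq\outmap(U)\oplus\outmap(V)$; your parenthetical justification (``it cannot, since \dots $W$ is antipodal-free here'') is not an argument. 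The fact is true, but it needs (\ref{eq:vertexflip}): in the face $\face$ spanned by $W$ and $V$ with carrier $\{i_1<\cdots<i_k\}$, the unique vertex complementary to $W$ under $\outmap_{\face}$ is $W\oplus\{i_k\}$, which is not the antipodal vertex $V$ once $k\geq 2$. Second, for $U=W\oplus\{i\}$ your ``crucial observation'' that $i\in U\oplus V$ is simply false ($V$ may agree with $U$ in coordinate $i$); that subcase is in fact the harmless one, while the dangerous one---$i\in U\oplus V$ with $(\outmap(U)\oplus\outmap(V))\cap(U\oplus V)=\{i\}$, where the flip empties the intersection, together with the dual danger that the flip creates a new inclusion $U\oplus V\subseteq\outmap'(U)\oplus\outmap'(V)$ with $|U\oplus V|$ even---is left at the level of ``condition (\ref{eq:vertexflip}) is exactly the extra rigidity that rules this out''. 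That is the crux of the lemma, not a write-up detail.

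For contrast, the paper avoids all of this pairwise bookkeeping: for any face $\Face$ containing $W$, conditions (\ref{eq:vertexflip}) and (\ref{eq:compl}) imply that the $k+1$ affected vertices merely permute their $\outmap_{\Face}$-values cyclically (equations (\ref{eq:outprime}), (\ref{eq:outprime2}), (\ref{eq:cyclicshift})), so every face keeps the same number of sinks and the USO property is immediate; oddness then follows from the cap characterization of Lemma~\ref{lem:oddchar} by noting that each complementary pair moves by even total Hamming distance (or the one special pair by $1+1$). If you want to salvage your pair-by-pair route, the cleanest repair is to observe that (\ref{eq:char}) for a pair $U,V$ is equivalent to $\outmap'_{\face}(U)\neq\outmap'_{\face}(V)$ in the face $\face$ they span, and then to import that same permutation argument---at which point you have essentially reproduced the paper's proof.
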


\begin{proof}
  We first show that every face $\Face_{\outmap'}$ has a unique sink,
  so that $\outmap'$ is a USO. If $W\not\in\vert{\Face}$, then
  $\Face_{\outmap'}=\Face_{\outmap}$, so there is nothing to
  show. If $W\in \vert{\Face}$, let
  $\carr{\Face}=\{i_1,i_2,\ldots,i_k\}$, $i_1<i_2<\cdots<i_k$. Using (\ref{eq:symdiff}),
  condition (\ref{eq:vertexflip}) yields 
\begin{equation}\label{eq:outprime}
\outmap_{\Face}(W) \oplus \outmap_{\Face}(W\oplus\{i_t\}) = \{i_1,i_2,\ldots,i_t\}, \quad \forall t\in[k]
\end{equation}
and further
\begin{equation}\label{eq:outprime2}
\outmap_{\Face}(W\oplus\{i_s\} \oplus \outmap_{\Face}(W\oplus\{i_t\}) =
\{i_{s+1},i_{s+2},\ldots,i_t\}, \quad \forall s,t\in[k], s<t.
\end{equation}

In particular, $W$ is complementary to $W\oplus\{i_k\}$ in $\Face$,
but this is the only complementary pair among the $k+1$ 
vertices in $\Face$ that are affected by complementing $W$.
From (\ref{eq:compl}), it similarly follows that
\begin{equation}\label{eq:cyclicshift}
\begin{array}{lclcl}
\outmap'_{\Face}(W) &=& \outmap_{\Face}(W) \oplus \{i_1,i_2,\ldots,i_k\} &\stackrel{(\ref{eq:outprime})}{=}&
  \outmap_{\Face}(W\oplus\{i_k\}), \\
 \outmap'_{\Face}(W\oplus\{i_1\}) &=& \outmap_{\Face}(W\oplus\{i_1\})\oplus\{i_1\}  &\stackrel{(\ref{eq:outprime})}{=}& \outmap_{\Face}(W), \\
\outmap'_{\Face}(W\oplus\{i_t\}) &=& \outmap_{\Face}(W\oplus\{i_t\}) \oplus\{i_{t}\}
                                    &\stackrel{(\ref{eq:outprime2})}{=}& \outmap_{\Face}(W\oplus\{i_{t-1}\}),
                                    
\end{array}
\end{equation}
for $t=2,\ldots,k$.
This means that the $k+1$ affected vertices 
just permute their outmap values under
$\outmap_{\Face}\rightarrow \outmap'_{\Face}$. This does not change
the number of sinks, so $F_{\outmap'}$ has a unique sink as well.

It remains to show that $\Face_{\outmap'}$ is a cap, so
$\Cube_{\outmap'}$ is an odd USO by Lemma~\ref{lem:oddchar}. Since
$\Face_{\outmap}$ is a cap, it suffices to show that complementary
vertices keep odd Hamming distance under
$\outmap_{\Face}\rightarrow \outmap'_{\Face}$. This can also be seen
from (\ref{eq:cyclicshift}): for $t=2,\ldots,k$, the vertex of outmap
value $\outmap_{\Face}(W\oplus\{i_{t-1}\})$ moves by Hamming distance
$2$, namely from $W\oplus\{i_{t-1}\}$ (under $\outmap_{\Face}$) to
$W\oplus\{i_{t}\}$ (under $\outmap'_{\Face}$). Hence it still has odd
Hamming distance to its unaffected complementary vertex. The two
complementary vertices of outmap values $\outmap_{\Face}(W)$ and
$\outmap_{\Face}(W\oplus\{i_k\})$ move by Hamming distance $1$
each. Vertices of other outmap values are unaffected.
\end{proof}

As an example, if we complement the vertex $\overline{Y}$ in the
Klee-Minty cube of Figure~\ref{fig:KM}, we obtain the odd USO in
Figure~\ref{fig:oddface} (left); see
Figure~\ref{fig:complement}. Vertices $\overline{X}$ and
$\overline{Z}$ have moved by Hamming distance 2, while $Y$ and
$\overline{Y}$ have moved by Hamming distance $1$ each. If we
subsequently also complement $W$ (whose neighborhood was unaffected,
so Lemma~\ref{lem:vertexflip} still applies), we obtain another odd
USO (actually, a rotated Klee-Minty cube).

\begin{figure}[htb]
\begin{center}
\includegraphics[width=0.3\textwidth]{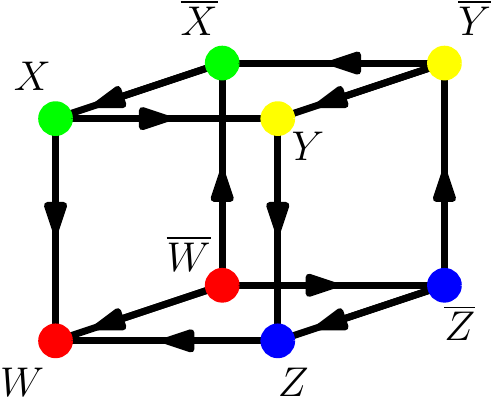}
\includegraphics[width=0.3\textwidth]{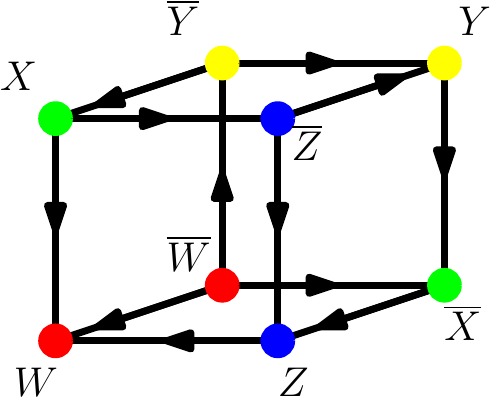}
\includegraphics[width=0.3\textwidth]{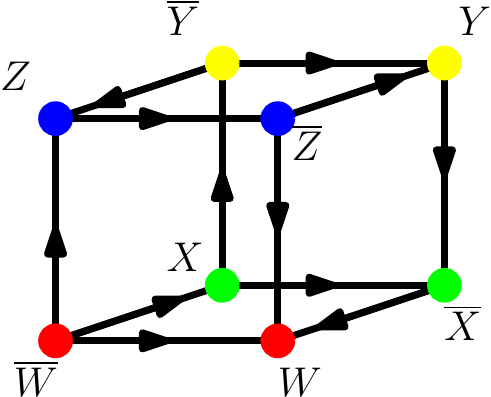}
\end{center}
\caption{Complementing the two vertices $\overline{Y},W$ in
  succession, starting from the Klee-Minty cube (left)}
\label{fig:complement}
\end{figure}

Let $\odd(n)$ denote the number of odd USOs of the standard $n$-cube.
By Definition~\ref{def:odd}, we get 
\begin{equation}\label{eq:oddborder}
\odd(n)=\border(n), \quad \forall n\geq 0,
\end{equation}
as duality (Lemma~\ref{lem:duality}) is a bijection on the set of all USOs.

\section{Counting PUSOs and odd USOs}\label{sec:counting}
With characterizations of USOs, PUSOs, border USOs, and odd USOs
available, one can explicitly enumerate these objects for small
dimensions. Here are the results up to dimension $5$ (the USO column
is due to Schurr~\cite[Chapter 6]{schurr2004diss}). We remark that
most numbers (in particular, the larger ones) have not independently
been verified.

\begin{table}[htb]
    \label{uso_enum_small_n}
    \centering
        \begin{tabular}{|c|r|r|r|}
            \hline
            $n$ & $\uso(n)$ & $\puso(n)=2\border(n-1)$ & $\border(n)=\odd(n)$\\
            \hline
            0 & 1 & 0 & 1\\
            \hline
            1 & 2 & 0 & 2\\
            \hline
            2 & 12 & 4 & 8\\
            \hline
            3 & 744 & 16 & 112\\
            \hline
            4 & 5'541'744 & 224 & 12'928\\
            \hline
            5 & 638'560'878'292'512 & 25'856 & 44'075'264\\            
            \hline
        \end{tabular}
        \caption{The number of USOs, PUSOs and border / odd USOs of the standard
          $n$-cube, obtained through
          computer enumeration}
\end{table}
The number of PUSOs appears to be very small, compared to the total
number of USOs of the same dimension. In this section, we will show
the following asymptotic results that confirms this impression.

\begin{theorem}[Counting PUSOs]\label{thm:pusocount}
Let $\puso(n)$ denote the number of PUSOs of the standard $n$-cube. 
\begin{itemize}
\item[(i)] For $n\geq 2$, $\puso(n) \leq 2^{2^{n-1}}$.
\item[(ii)] For $n\geq 6$, $\puso(n) < 1.777128^{2^{n-1}}$.
\item[(iii)] For $n=2^k, k\geq 2$, $\puso(n) \geq 2^{2^{n-1-\log n}+1}$.
\end{itemize}
\end{theorem}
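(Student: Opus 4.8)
Since $\puso(n)=2\border(n-1)=2\odd(n-1)$ by~\eqref{eq:pusoborder} and~\eqref{eq:oddborder}, all three bounds reduce to counting odd USOs of the $(n-1)$-cube; write $m=n-1$, so it suffices to show $\odd(m)\le 2^{2^m}$ (giving (i) via $\puso(n)=2\odd(n-1)\le 2\cdot 2^{2^{n-1}}$... wait, we want $\le 2^{2^{n-1}}$, so the bound on $\odd$ must actually be $\odd(m)\le 2^{2^m-1}$), a refined bound $\odd(m)<c^{2^m}$ with $c$ chosen so that $2c^{2^{n-1}}<1.777128^{2^{n-1}}$ for $n\ge 6$, and $\odd(m)\ge 2^{2^{m-\log(m+1)}}$ when $m+1=2^k$. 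Let me lay out each.

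\textbf{Upper bound (i) and (ii).} The plan is to bound $\odd(m)$ by building an odd USO one vertex at a time in order of increasing Hamming weight (or any linear extension of the subset lattice), using the cap characterization of Lemma~\ref{lem:oddchar}. When we assign $\outmap(V)$, the constraints coming from all faces containing $V$ together with already-processed vertices must be respected; the key point is that the odd-USO condition~\eqref{eq:odd_thm_eq}, being a ``local'' condition on the face spanned by two vertices, restricts $\outmap(V)$ to a set whose size can be bounded well below $2^m$. Concretely, I expect that for each already-fixed neighbor $U$ of $V$ along coordinate $i$, consistency forces $i\in\outmap(V)\oplus\outmap(U)$, pinning one bit; and the cap conditions in 2-faces (all 2-faces are bows, Corollary~\ref{cor:odd}(ii)) further constrain the remaining bits. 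Counting the product of the number of choices over all $2^m$ vertices and checking the exponent gives $2^{2^m-1}$ for~(i); for~(ii) one needs to be more careful for small vertices (low Hamming weight), where many constraints are already active, extracting enough savings to push the base below $1.777128^{1/2}\approx 1.333$ per vertex on average; this refined bookkeeping, presumably split by Hamming weight with a concrete arithmetic estimate that kicks in for $n\ge 6$, is the main obstacle in the upper-bound part.

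\textbf{Lower bound (iii).} Here the plan is fully constructive via Lemma~\ref{lem:vertexflip}. Start from the Klee--Minty cube $\KM^{[m]}$ (Lemma~\ref{lem:km}), which satisfies condition~\eqref{eq:vertexflip} at \emph{every} vertex $W$. Pick any set $S$ of vertices of pairwise Hamming distance $\ge 3$, i.e.\ an independent set in the square of the $m$-cube, equivalently a binary code of minimum distance $3$; complementing a subset $T\subseteq S$ one vertex at a time is legitimate because each complementation only touches $W$ and its neighbors, leaving the hypothesis~\eqref{eq:vertexflip} intact at every not-yet-complemented vertex of $S$ (their closed neighborhoods are disjoint from those already touched), and by Lemma~\ref{lem:vertexflip} the result is again an odd USO. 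Distinct subsets $T$ give distinct orientations since complementing exactly the vertices of $T$ changes the out-degree parity pattern in a way that identifies $T$ (the vertices whose incident edges got reversed are recoverable as those whose $\outmap$ changed in all coordinates relative to $\KM^{[m]}$, modulo the Hamming-distance-$\ge 3$ separation). Hence $\odd(m)\ge 2^{|S|}$, and it remains to exhibit $S$ with $|S|\ge 2^{m-\log(m+1)}=2^m/(m+1)$ when $m+1=2^k$: this is exactly the size of a Hamming code of length $m=2^k-1$, which has $2^{m}/(m+1)=2^{2^k-1-k}$ codewords and minimum distance $3$. Plugging $m=n-1$, $m+1=n=2^k$ yields $\odd(n-1)\ge 2^{2^{n-1-\log n}}$ and thus $\puso(n)\ge 2\cdot 2^{2^{n-1-\log n}}=2^{2^{n-1-\log n}+1}$, which is~(iii). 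The only real work is citing/recalling the Hamming code parameters and checking that the vertex-at-a-time complementation argument's disjointness condition is preserved throughout; I anticipate no serious obstacle there, the Klee--Minty cube having been engineered precisely so that~\eqref{eq:vertexflip} holds everywhere.
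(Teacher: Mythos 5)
Your reduction to counting odd USOs via $\puso(n)=2\border(n-1)=2\odd(n-1)$ and your lower bound (iii) are exactly the paper's argument: Klee--Minty cube, a perfect Hamming code of length $n-1=2^k-1$ with $2^{n-1}/n$ codewords at pairwise distance $3$, and independent complementation of each codeword via Lemma~\ref{lem:vertexflip}. That part is sound (your extra care about distinctness of the resulting orientations and about preserving hypothesis~(\ref{eq:vertexflip}) at untouched codewords is welcome, if not strictly spelled out in the paper).

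The upper bounds are where you have a genuine gap. Your plan is to assign outmap values vertex by vertex and multiply the numbers of choices; but the only constraints you actually identify (consistency along edges to already-processed neighbors) pin down $|V|$ bits of $\outmap(V)$, giving $\prod_V 2^{m-|V|}=2^{m2^{m-1}}$, which is far weaker than the required $2^{2^m-1}$. The missing idea is a \emph{facet recursion}: an odd USO of the $n$-cube restricts to odd USOs in two opposite facets (Corollary~\ref{cor:odd}(i)), and because every $2$-face is a bow (Corollary~\ref{cor:odd}(ii)), fixing the orientation of a single connecting edge propagates to all $2^{n-1}$ connecting edges. Hence $\odd(n)\le 2\,\odd(n-1)^2$, and unrolling gives $2\odd(n-1)\le(2\odd(k))^{2^{n-1-k}}$ for any $k<n$; taking $k=0$ yields (i). You correctly sense that the bow condition ``further constrains the remaining bits,'' but you never extract the one-bit-per-dimension-of-connecting-edges fact, and you flag the bookkeeping for (ii) as ``the main obstacle''---rightly so, because (ii) does not come from refined per-vertex bookkeeping at all: it comes from anchoring the same recursion at $k=5$ with the computer-enumerated value $\odd(5)=44{,}075{,}264$, so that the base is $(2\cdot 44{,}075{,}264)^{1/32}<1.777128$. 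Without the recursion and that enumerated value there is no route to the stated constant.
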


This shows that the number $\puso(n)$ is doubly exponential but still
negligible compared to the number $\uso(n)$ of USOs of the standard $n$-cube:
Matou\v{s}ek~\cite{matousek2006number} has shown that
\[
\uso(n) \geq \left(\frac{n}{e}\right)^{2^{n-1}},
\]
with a ``matching'' upper bound of $\uso(n) = n^{O(2^n)}$.

As the main technical step, we count odd USOs. We start with the upper bound.

\begin{lemma} \label{lem:bound} Let $n\geq 1$. Then 
\begin{itemize}
\item[(i)] $\odd(n) \leq 2\odd(n-1)^2$ for $n>0$.
\item[(ii)] For $n\geq 2$ and all $k<n$, 
\begin{equation}\label{eq:oddlower}
2\odd(n-1) \leq \left(2\odd(k)\right)^{2^{n-1-k}} =
\sqrt[2^k]{2\odd(k)}^{2^{n-1}}.
\end{equation}
\end{itemize}
\end{lemma}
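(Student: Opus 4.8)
The plan is to prove (i) by decomposing an odd USO of the $n$-cube along one coordinate into its two opposite facets, and then to obtain (ii) from (i) by a short induction.

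For (i), let $\Cube$ be the standard $n$-cube and let $\Face=\Cube^{[n-1]}$ and $\face=\Cube^{[\{n\},[n]]}$ be its two facets perpendicular to coordinate $n$; both have carrier $[n-1]$, so each carries $\odd(n-1)$ odd USOs. First I would restrict an arbitrary odd USO $\Cube_{\outmap}$ to these facets: by Corollary~\ref{cor:odd}~(i) the two restrictions are odd USOs, so the ordered pair of facet orientations ranges over a set of size at most $\odd(n-1)^2$. The core claim is then that every such pair is induced by \emph{at most two} odd USOs of $\Cube$; combining the two bounds gives $\odd(n)\le 2\,\odd(n-1)^2$. To prove the core claim, fix the two facet orientations. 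An orientation of $\Cube$ inducing them is completely described by the ``vertical'' pattern $s:\vert{\Face}\to\{0,1\}$, $s(V):=[\,n\in\outmap(V)\,]$, because consistency along coordinate $n$ then fixes every $n$-edge and the remaining coordinates of each outmap value are already prescribed by the facet orientations. The point is that the bow condition pins $s$ down almost completely: for every edge $\{V,V\oplus\{i\}\}$ of $\Face$ ($i\in[n-1]$), the $2$-face of $\Cube$ with carrier $\{i,n\}$ containing it must be a bow by Corollary~\ref{cor:odd}~(ii), and since its two ``horizontal'' ($i$-)edges are already fixed by the facet orientations, a short inspection of the four orientations of the $2$-cube (ruling out eye, twin peak, cycle) shows that this forces the value of $s(V)\oplus s(V\oplus\{i\})$ --- namely $1$ if the two facets agree on this $i$-edge under the natural identification of $\Face$ and $\face$, and $0$ if they disagree. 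As $\Face$ is connected and each of its edges is constrained in this way, these prescribed edge-parities determine $s$ up to adding a global constant; hence there are at most two admissible $s$ (or none, if the constraints are inconsistent), i.e.\ at most two odd USO completions. This establishes (i).

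For (ii), set $g(m):=2\,\odd(m)$. Multiplying the inequality of (i) by $2$ gives $g(n)\le g(n-1)^2$ for all $n>0$, and iterating this from level $n-1$ down to level $k$ (that is, $(n-1)-k\ge 0$ squarings) yields $g(n-1)\le g(k)^{2^{(n-1)-k}}$, which is exactly $2\,\odd(n-1)\le\bigl(2\,\odd(k)\bigr)^{2^{n-1-k}}$. The alternative form in the statement is just the identity $\bigl(2\,\odd(k)\bigr)^{2^{n-1-k}}=\bigl(\sqrt[2^{k}]{2\,\odd(k)}\bigr)^{2^{n-1}}$.

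The step I expect to be the real obstacle is the little $2$-cube case analysis inside (i): one has to check precisely that ``every $2$-face through coordinate $n$ is a bow'' is equivalent to ``$s(V)\oplus s(V\oplus\{i\})$ takes the prescribed value on every edge of $\Face$'', i.e.\ that each of the three non-bow orientations of the $2$-cube corresponds exactly to the ``wrong'' XOR value. Everything else --- the facet decomposition, the connectivity argument fixing $s$ up to a global flip, and the induction in (ii) --- is routine.
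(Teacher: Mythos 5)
Your proof is correct and follows essentially the same route as the paper: split an odd USO into two opposite facets (odd USOs by Corollary~\ref{cor:odd}~(i)), show via the all-2-faces-are-bows property (Corollary~\ref{cor:odd}~(ii)) that the connecting edges are determined up to one global choice, and obtain (ii) by iterating. The paper merely states the propagation argument more tersely ("once we fix the direction of some connecting edge, all the others are fixed"), whereas you spell it out via the forced XOR values and connectivity of the facet; the content is the same.
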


\begin{proof}
  By Corollary~\ref{cor:odd} (i), every odd USO consists of two odd
  USOs in two opposite facets, and edges along coordinate $n$, say,
  that connect the two facets. We claim that for every choice of odd
  USOs in the two facets, there are at most two ways of connecting the
  facets. Indeed, once we fix the direction of some connecting edge,
  all the others are fixed as well, since the orientation of an edge
  $\{V,V\oplus\{n\}\}$ determines the orientations of all
  ``neighboring'' edges $\{V\oplus\{i\},V\oplus\{i,n\}\}$ via
  Corollary~\ref{cor:odd} (ii) (all $2$-faces are bows). Inequality
  (i) follows, and (ii) is a simple induction. 
\end{proof}

The three bounds on $\puso(n)$ now follow
from $\puso(n)=2\border(n-1)$ (\ref{eq:pusoborder}) and
$\border(n-1)=\odd(n-1)$ (\ref{eq:oddborder}). For the bound of
Theorem~\ref{thm:pusocount} (i), we use (\ref{eq:oddlower}) with
$k=0$, and for Theorem~\ref{thm:pusocount} (ii), we employ $k=5$ and
$\odd(5)= 44'075'264$. The lower bound of Theorem~\ref{thm:pusocount} (iii)
is a direct consequence of the following ``matching'' lower bound on
the number of odd USOs.

\begin{lemma}\label{lem:odd_lower}  Let $n=2^k, k\geq 1$. Then
$\odd(n-1) \geq 2^{2^{n-1-\log n}}$.
\end{lemma}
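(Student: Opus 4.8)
The plan is to exhibit a large set of vertices in the Klee-Minty cube $\KM^{[n-1]}$ that can be complemented independently, invoking Lemma~\ref{lem:vertexflip} to turn each subset of complementations into a distinct odd USO. By Lemma~\ref{lem:km}, every vertex of $\KM^{[n-1]}$ satisfies the crucial condition~(\ref{eq:vertexflip}), so Lemma~\ref{lem:vertexflip} applies at any vertex whose closed neighborhood is still intact. Hence if I pick a set $S\subseteq\vert{\Cube}$ of vertices with pairwise Hamming distance at least $3$, then the neighborhoods $\{W\}\cup\{W\oplus\{i\}: i\in[n-1]\}$ are pairwise disjoint, complementing any subset $T\subseteq S$ produces a well-defined odd USO (apply Lemma~\ref{lem:vertexflip} once for each $W\in T$, in any order), and different subsets $T$ give different orientations because complementing $W$ changes exactly the edges at $W$. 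This yields $\odd(n-1)\ge 2^{|S|}$, so it suffices to find an $S$ of size at least $2^{n-1-\log n}=2^{n-1}/n$.

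The natural candidate is a linear (Hamming) code of minimum distance $3$: the binary Hamming code. For block length $m=2^k-1=n-1$, the Hamming code is a linear subspace of $\mathbb{F}_2^{m}$ of dimension $m-k=n-1-k=n-1-\log n$ and minimum distance $3$. Identifying $\vert{\Cube^{[n-1]}}=2^{[n-1]}$ with $\mathbb{F}_2^{n-1}$ in the usual way, its codewords form a set $S$ of $2^{n-1-\log n}$ subsets of $[n-1]$ with pairwise symmetric-difference size (= Hamming distance) at least $3$. Plugging $|S|=2^{n-1-\log n}$ into $\odd(n-1)\ge 2^{|S|}$ gives exactly $\odd(n-1)\ge 2^{2^{n-1-\log n}}$, as claimed. (The hypothesis $n=2^k$ is precisely what makes the block length $n-1=2^k-1$ fit the Hamming code; $k\ge 1$ ensures $n-1\ge 1$.)

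I would structure the write-up in three short steps: (1) state and justify the "independent complementation" principle — any subset of a distance-$\ge 3$ vertex set can be complemented simultaneously in $\KM^{[n-1]}$, yielding $2^{|S|}$ distinct odd USOs — citing Lemmas~\ref{lem:km} and~\ref{lem:vertexflip}; the only subtlety is to observe that after complementing $W$, condition~(\ref{eq:vertexflip}) may fail at $W$ and its neighbors, but those vertices are never touched again, and at every other vertex of $S\setminus\{W\}$ both the outmap values and the condition are unchanged, so Lemma~\ref{lem:vertexflip} can be reapplied. (2) Invoke the existence of the binary Hamming code of length $2^k-1$ and dimension $2^k-1-k$ with minimum distance $3$ to get $|S|=2^{n-1-\log n}$. (3) Conclude $\odd(n-1)\ge 2^{2^{n-1-\log n}}$.

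The main obstacle is step (1): verifying carefully that the complementations really are independent, i.e.\ that applying Lemma~\ref{lem:vertexflip} repeatedly is legitimate and that distinct subsets $T\subseteq S$ yield distinct orientations. For legitimacy one checks that the hypothesis of Lemma~\ref{lem:vertexflip} (oddness plus condition~(\ref{eq:vertexflip}) at the vertex being complemented) is preserved for the not-yet-complemented vertices of $S$: oddness is preserved by the lemma's conclusion, and the outmap is altered only on the closed neighborhood of the complemented vertex, which by the distance-$\ge 3$ property is disjoint from the closed neighborhoods of all other members of $S$; so for $W'\in S$ not yet processed, both $\outmap(W')$ and all $\outmap(W'\oplus\{i\})$ are still as in $\KM^{[n-1]}$, hence~(\ref{eq:vertexflip}) still holds there. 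Distinctness is immediate since the set of edges reversed relative to $\KM^{[n-1]}$ is exactly $\bigcup_{W\in T}\{$edges at $W\}$, a union of disjoint stars, which determines $T$. Everything else is bookkeeping and the standard coding-theoretic fact about Hamming codes.
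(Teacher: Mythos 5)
Your proposal is correct and follows essentially the same route as the paper's proof: take the codewords of the perfect binary Hamming code of block length $n-1=2^k-1$ as a set of $2^{n-1-\log n}$ vertices with pairwise Hamming distance $3$, and complement arbitrary subsets of them in $\KM^{[n-1]}$ via Lemma~\ref{lem:vertexflip}. The extra care you take in verifying that condition~(\ref{eq:vertexflip}) survives at not-yet-complemented vertices, and that distinct subsets yield distinct orientations, is sound and merely makes explicit what the paper leaves implicit under ``disjoint neighborhoods.''
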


\begin{proof} If $n=2^k$, there exists a perfect Hamming code of block
  length $n-1$ and message length $n-1-\log n$~\cite{Hamming}. In our
  language, this is a set ${\cal W}$ of $2^{n-1-\log n}$ vertices of
  the standard $(n-1)$-cube, with pairwise Hamming distance $3$ and
  therefore disjoint neighborhoods. Hence, starting from the
  Klee-Minty cube $\KM^{[n-1]}$ as introduced in Lemma~\ref{lem:km},
  we can apply Lemma~\ref{lem:vertexflip} to get a different odd USO
  for every subset of ${\cal W}$, by complementing all
  vertices in the given subset. The statement follows.
\end{proof} 

\section{Conclusion}\label{sec:conclusion}
In this paper, we have introduced, characterized, and (approximately)
counted three new classes of $n$-cube orientations: pseudo unique sink
orientations (PUSOs), border unique sink orientations (facets of
PUSOs), and odd unique sink orientations (duals of border USOs). A
PUSO is a dimension-minimal witness for the fact that a given cube
orientation is not a USO. The requirement of minimal dimension induces
rich structural properties and a PUSO frequency that is negligible
compared to the frequency of USOs among all cube orientations.

An obvious open problem is to close the gap in our approximate
counting results and determine the true asymptotics of
$\log\odd(n)$ and hence $\log\puso(n)$. We have shown that these
numbers are between $\Omega(2^{n-\log n})$ and $O(2^n)$. As our lower
bound construction based on the Klee-Minty cube seems to yield rather
specific odd USOs, we believe that the lower bound can be improved.

Also, border USOs and odd USOs might be algorithmically more tractable
than general USOs. The standard complexity measure here is the number
of outmap values\footnote{provided by an oracle that can be invoked
  for every vertex} that need to be inspected in order to be able to
deduce the location of the sink~\cite{szabo2001unique}. For example,
in dimension $3$, we can indeed argue that border USOs and odd USOs
are easier to solve than general USOs. It is known that 4 outmap
values are necessary and sufficient to locate the sink in any USO of
the 3-cube~\cite{szabo2001unique}. But in border USOs and odd USOs of
the 3-cube, 3 suitably chosen outmap values suffice to deduce the
orientations of all edges and hence the location of the
sink~\cite{wff}; see Figure~\ref{fig:4steps}. 

\begin{figure}[htb]
\begin{center}
\includegraphics[width=0.65\textwidth]{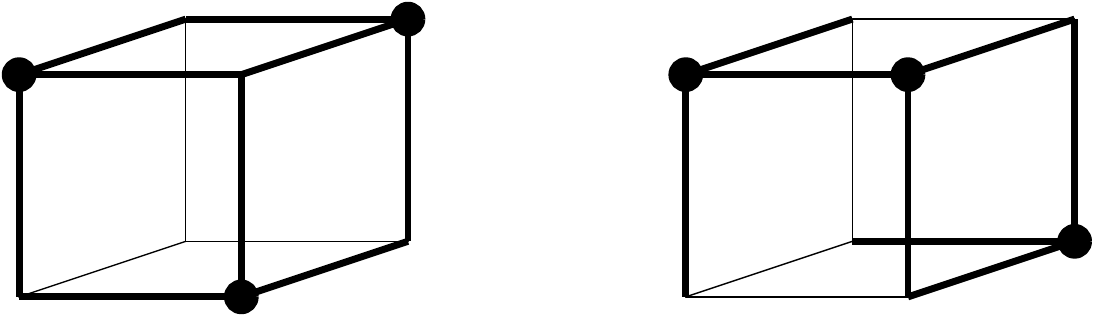}
\end{center}
\caption{The outmap values of the 3 indicated vertices determine the
  orientations of the bold edges. In the case of a border USO (left),
  the remaing orientations are determined by the condition that
  antipodal vertices have different outmap parities
  (Theorem~\ref{thm:pusof_charact}). In the case of an odd USO
  (right), the remaining orientations are determined by the condition that
  all 2-faces are bows (Corollary~\ref{cor:odd}(ii)).}
\label{fig:4steps}
\end{figure}

\bibliography{puso_arxiv}

\begin{thebibliography}{10}

\bibitem{Aldous}
David Aldous.
\newblock Minimization algorithms and random walk on the d-cube.
\newblock {\em The Annals of Probability}, 11(2):403--413, 1983.

\bibitem{Bosshard}
Vitor Bosshard.
\newblock {Classical and Quantum Algorithms for USO Recognition}.
\newblock Master's thesis, ETH Z{\"u}rich, Switzerland, 2015.

\bibitem{fischer2004}
Kaspar Fischer and Bernd G{\"a}rtner.
\newblock The smallest enclosing ball of balls: combinatorial structure and
  algorithms.
\newblock {\em International Journal of Computational Geometry \&
  Applications}, 14(04n05):341--378, 2004.

\bibitem{Friedmann}
Oliver Friedmann.
\newblock {\em Exponential Lower Bounds for Solving Infinitary Payoff Games and
  Linear Programs}.
\newblock PhD thesis, Ludwig-Maximilians-Universit{\"a}t M{\"u}nchen, Juli
  2011.

\bibitem{gartner2002}
Bernd G{\"a}rtner.
\newblock The random-facet simplex algorithm on combinatorial cubes.
\newblock {\em Random Structures \& Algorithms}, 20(3):353--381, 2002.

\bibitem{gartner2006linear}
Bernd G{\"a}rtner and Ingo Schurr.
\newblock Linear programming and unique sink orientations.
\newblock In {\em Proceedings of the seventeenth annual ACM-SIAM symposium on
  Discrete algorithm}, pages 749--757. Society for Industrial and Applied
  Mathematics, 2006.

\bibitem{gaertner2015complexity}
Bernd G{\"a}rtner and Antonis Thomas.
\newblock {The Complexity of Recognizing Unique Sink Orientations}.
\newblock In Ernst~W. Mayr and Nicolas Ollinger, editors, {\em 32nd
  International Symposium on Theoretical Aspects of Computer Science (STACS
  2015)}, volume~30 of {\em Leibniz International Proceedings in Informatics
  (LIPIcs)}, pages 341--353, Dagstuhl, Germany, 2015. Schloss
  Dagstuhl--Leibniz-Zentrum f{\"u}r Informatik.

\bibitem{Hamming}
Richard~W. Hamming.
\newblock Error detection and error correction codes.
\newblock {\em The Bell System Technical Journal}, XXIX(2):147–160, 1950.

\bibitem{HZ}
Thomas~Dueholm Hansen and Uri Zwick.
\newblock {Random-Edge Is Slower Than Random-Facet on Abstract Cubes}.
\newblock In Ioannis Chatzigiannakis, Michael Mitzenmacher, Yuval Rabani, and
  Davide Sangiorgi, editors, {\em 43rd International Colloquium on Automata,
  Languages, and Programming (ICALP 2016)}, volume~55 of {\em Leibniz
  International Proceedings in Informatics (LIPIcs)}, pages 51:1--51:14,
  Dagstuhl, Germany, 2016. Schloss Dagstuhl--Leibniz-Zentrum f{\"u}r
  Informatik.

\bibitem{Kal}
Gil Kalai.
\newblock A subexponential randomized simplex algorithm.
\newblock In {\em Proc. 24th annu. ACM Symp. on Theory of Computing.}, pages
  475--482, 1992.

\bibitem{Kha}
Leonid~G. Khachiyan.
\newblock Polynomial algorithms in linear programming.
\newblock {\em U.S.S.R. Comput. Math. and Math. Phys.}, 20:53--72, 1980.

\bibitem{KM}
Victor Klee and George~J. Minty.
\newblock How good is the simplex algorithm?
\newblock In Oliver Shisha, editor, {\em Inequalities, III}, pages 159--175,
  New York, 1972. Academic Press.

\bibitem{matousek2006number}
Ji{\v{r}}{\'\i} Matou{\v{s}}ek.
\newblock The number of unique-sink orientations of the hypercube.
\newblock {\em Combinatorica}, 26(1):91--99, 2006.

\bibitem{MSW}
Ji{\v r}{\'\i} Matou{\v s}ek, Micha Sharir, and Emo Welzl.
\newblock A subexponential bound for linear programming.
\newblock {\em Algorithmica}, 16(4--5):498--516, 1996.

\bibitem{Mat}
Ji{\v r}{\'\i} Matou\v{s}ek.
\newblock Lower bound for a subexponential optimization algorithm.
\newblock {\em Random Structures \& Algorithms}, 5(4):591--607, 1994.

\bibitem{MATOUSEK2006262}
Jiří Matoušek and Tibor Szabó.
\newblock Random edge can be exponential on abstract cubes.
\newblock {\em Advances in Mathematics}, 204(1):262 -- 277, 2006.

\bibitem{schurr2004diss}
Ingo~A. Schurr.
\newblock {\em Unique sink orientations of cubes}.
\newblock PhD thesis, ETH Zürich, 2004.

\bibitem{wff}
Silvan~Melchior Simon~Biland, Michael~Frey.
\newblock Notes of research done as part of the seminar \emph{A Taste of
  Research}.
\newblock Department of Computer Science, ETH Zurich, 2017.

\bibitem{StiWat}
Alan Stickney and Layne Watson.
\newblock Digraph models of bard-type algorithms for the linear complementary
  problem.
\newblock {\em Mathematics of Operations Research}, 3:322--333, 1978.

\bibitem{szabo2001unique}
Tibor Szab{\'o} and Emo Welzl.
\newblock Unique sink orientations of cubes.
\newblock In {\em Foundations of Computer Science, 2001. Proceedings. 42nd IEEE
  Symposium on}, pages 547--555. IEEE, 2001.

\end{thebibliography}
\bibliographystyle{plain}

\end{document}